\theoremstyle{plain}
\newtheorem{theorem} {Theorem} [section]
\newtheorem{lemma} [theorem]{Lemma}
\newtheorem{proposition}[theorem]{Proposition}
\newtheorem{corollary} [theorem]{Corollary}
\theoremstyle{definition}
\newtheorem{remark}[theorem]{Remark}
\numberwithin{equation}{section}
\newcommand{\g}{\mathfrak{g}} 
\newcommand{\Aut}
{\mathrm{Aut}}
\newcommand{\aut}{\mathfrak{aut}}
\newcommand{\aff}
{\mathfrak{aff}}
\newcommand{\ip}[1]{\left\langle#1\right\rangle}
\newcommand{\RE}{\mathrm{Re}\,}
\title{A K\"ahler potential on the unit ball \\with constant differential norm}
\date\today
\author{Kang-hyurk Lee and Aeryeong Seo}
\address{Department of Mathematics and Research Institute of Natural Science, Gyeongsang
National University, Jinju, Gyeongnam, 52828, The Republic of Korea}
\email{nyawoo@gnu.ac.kr}
\address{Department of Mathematics and RIRCM, Kyungpook National University,
80, Daehak-ro, Buk-gu, Daegu, 41566, Republic of Korea}%
\email{aeryeong.seo@knu.ac.kr}
\subjclass[2010]{32M05, 30C20, 30F45
 }%
\keywords{Siegel domain, unit ball, K\"ahler potential,  gradient of potential, parabolic vector field, hyperbolic vector field, }
\begin{document}
\maketitle
\tableofcontents
\markboth{Kang-Hyurk Lee, Aeryeong Seo}{A K\"ahler potential with constant differential norm}

\begin{abstract}
Let $\mathbb B^n$ be the unit ball in $\mathbb C^n$ and $\mathbb H^n$ be the homogeneous Siegel domain of the second kind which is biholomorphic to $\mathbb B^n$.
We show that the K\"ahler potential of $\mathbb H^n$ is unique up to the automorphisms among K\"ahler potentials whose differentials have constant norms.

As an application, we consider a domain $\Omega$ in $\mathbb C^n$, which is biholomorphic to $\mathbb B^n$. 
We show that if $\Omega$ is affine homogeneous, then it is affine equivalent to $\mathbb H^n$. Assume next that its canonical potential with respect to the K\"ahler--Einstein metric has a differential with a constant norm. 
If the biholomorphism between $\Omega$ and $\mathbb B^n$ is a restriction of a M\"obius transformation, then the map is affine equivalent to a Cayley transform.
\end{abstract}

\section{Introduction}
For a K\"ahler manifold $M$ with its K\"ahler form $\omega$, we say that a function $\eta\colon M\to \mathbb R$ is a (K\"ahler) \emph{potential} if it satisfies 
$\sqrt{-1}\partial\overline \partial \eta = \omega$. 
If $\|\partial\eta\|_\omega$ is constant on $M$ where $\|\partial\eta\|_\omega$ is the norm of $\partial\eta$ measured by $\omega$, then we will say that $\eta$ has a \emph{constant differential norm} with respect to $\omega$. If the metric that we are using to measure the norm is obvious, then we will just say that $\eta$ has a constant differential norm.

Since Gromov's pioneering work on the $L^2$ cohomology theory related to the norm $\|d\eta\|_{\omega}$ \cite{Gromov1991}, a significant amount of research has been conducted in this area.
At first, Gromov introduced the idea of a \emph{K\"ahler hyperbolic} manifold; a complex manifold $M$ is said to be K\"ahler hyperbolic if its universal cover admits a K\"ahler metric $g$ and there exists its potential $\phi$ such that $\|\phi\|_g<\infty$. He presented in the same paper that a complete simply connected K\"ahler hyperbolic manifold $M$, which admits a compact quotient, has $\mathcal H^{p,q}=0$ for $p+q\neq \dim_{\mathbb C}M$ and $\mathcal H^{p,q}\neq 0$ for $p+q=\dim_{\mathbb C}M$, where $\mathcal H^{p,q}$ denotes the space of harmonic $L^2$ forms on $M$ of bidegree $(p,q)$. For the references on the study of K\"ahler hyperbolic manifolds, see \cite{BDET2022} and the references therein.

\medskip

One of the interesting results was proved by Kai--Ohsawa \cite{Kai-Ohsawa2007}, where they showed that any bounded homogeneous domain $B$ has a potential $\phi$ of the Bergman metric with constant differential norm. 
In \cite{CLY2020}, Choi--Lee--Yoo showed that on the unit disc $\Delta$ such potential of the Poincar\'e metric is unique up to the automorphisms of $\Delta$ assuming that the constant is unique for potentials with constant norms. Based on this property, they characterized the upper half space as a domain admitting a canonical potential whose differential norm is a certain constant.
Recently Choi--Lee--Seo \cite{CLS23} showed that for a simply connected complex manifold $M$ having dimension $n$ that covers a compact complex manifold and admits a complete K\"ahler--Einstein metric $\omega$ with negative Ricci curvature $-K$, if 
$M$ admits a potential $\varphi\colon M\to\mathbb R
$ of $\omega$ satisfying
$\|\partial\varphi\|_\omega^2\equiv  \frac{n+1}{K}$, then $M$ is biholomorphic to the unit ball.
See also \cite{LKH2021}.

\medskip

In this paper we generalize the result of Choi--Lee--Yoo \cite{CLY2020} to the higher dimensional ball $\mathbb B^n$ with $n\geq 2$. 
Let 
$$
\mathbb B^n := \{ z\in \mathbb C^n : |z|<1\}
$$ 
be the unit ball in $\mathbb C^n$
and 
$$
\mathbb H^n := \{w\in \mathbb C^n : \textup{Re } w_n > |w_1|^2 +\dots + |w_{n-1}|^2\}
$$
be the Siegel domain of the second kind 
which is biholomorphic to $\mathbb B^n$ by the Cayley transform $\mathcal{C} \colon \mathbb B^n\rightarrow \mathbb H^n$ given by
$$
\mathcal{C}(z) = \left( \frac{z_1}{1-z_n}, \ldots, \frac{z_{n-1}}{1-z_n}, \frac{1+z_n}{1-z_n}\right).
$$
Let $\omega_{\mathbb H^n}$ and $\omega_{\mathbb B^n}$ be the invariant K\"ahler--Einstein metrics of the Ricci curvature $-1$ on $\mathbb H^n$ and $\mathbb B^n$, respectively. 
The canonical (K\"ahler) potential of $\omega_{\mathbb H^n}$ is given by $\log\psi_0$ where
$$\psi_0(w):=(\text{Re}w_n - |w_1|^2-\cdots -|w_n|^2)^{-n-1}$$
and the pullback of $\log \psi_0$ by $\mathcal C$, which is a K\"ahler potential of $\omega_{\mathbb B^n}$, is given by $\log\varphi_0$ where 
\begin{equation}\nonumber
    \varphi_0(z):=\psi_0\circ\mathcal C(z)= \frac{|1-z_n|^{2(n+1)}}{\left(1-|z|^2\right)^{n+1}}.
\end{equation}
The squares of their  differential norms, $\|\partial\log \psi_0\|_{\omega_{\mathbb H^n}}^2$ and $\|\partial\log\varphi_0\|^2_{\omega_{\mathbb B^n}}$, are constant $n+1$. 
Interestingly, if there exists a potential on $\mathbb B^n$ for $\omega_{\mathbb B^n}$ that has constant differential norm, then the norm constant should be $n+1$ (see Theorem~\ref{1-dim thm} for $\Delta$ and Theorem~\ref{uniqueness of constant} for $\mathbb B^n$ with $n\geq 2$).

\smallskip

Our first main theorem is as follows:
\begin{theorem}\label{main1}
Let $\omega_{\mathbb H^n}$ be the Bergman--Poincar\'e metric on $\mathbb H^n$. 
Suppose that there exists a positive real valued function $\psi\colon \mathbb H^n\to \mathbb R$ such that $\log \psi$ is a K\"ahler potential of $\omega_{\mathbb H^n}$ and $\|\partial\log \psi\|_{\omega_{\mathbb H^n}}$ is constant on $\mathbb H^n$. Then $\log\psi$ is the canonical potential of $\mathbb H^n$ up to isotropy subgroup of $\mathbb H^n$ at $(0,\ldots,0,1)\in \mathbb H^n$.
\end{theorem}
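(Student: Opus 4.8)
The plan is to compare $\log\psi$ with the canonical potential $\log\psi_0$ and to show their difference is exactly the coboundary attached to an element of the isotropy group at $(0,\dots,0,1)$.

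First I would reduce to a single holomorphic unknown. Since $\sqrt{-1}\partial\bar\partial\log\psi=\omega_{\mathbb H^n}=\sqrt{-1}\partial\bar\partial\log\psi_0$, the difference $\log\psi-\log\psi_0$ is pluriharmonic; as $\mathbb H^n$ is biholomorphic to $\mathbb B^n$, hence contractible, I may write $\log\psi=\log\psi_0+\operatorname{Re}h$ for a holomorphic $h\colon\mathbb H^n\to\mathbb C$, unique up to an additive imaginary constant. The theorem then becomes the assertion that $h$ lies in the explicit family $\{\log\psi_0\circ F-\log\psi_0 : F\in K\}$, where $K$ is the isotropy subgroup at $(0,\dots,0,1)$. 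Writing $\partial\log\psi=\partial\log\psi_0+\tfrac12\partial h$ and expanding the squared norm gives
\[
\|\partial\log\psi\|_{\omega}^2=\|\partial\log\psi_0\|_{\omega}^2+\operatorname{Re}\langle\partial h,\partial\log\psi_0\rangle_{\omega}+\tfrac14\|\partial h\|_{\omega}^2.
\]
Because $\|\partial\log\psi_0\|_\omega^2\equiv n+1$ and the norm constant is necessarily $n+1$ (transferring the uniqueness of the constant from $\mathbb B^n$ via $\mathcal C$, Theorem~\ref{uniqueness of constant}), the hypothesis collapses to $\operatorname{Re}\langle\partial h,\partial\log\psi_0\rangle_{\omega}+\tfrac14\|\partial h\|_{\omega}^2\equiv 0$. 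Here $\langle\partial h,\partial\log\psi_0\rangle_\omega=V_0 h$, with $V_0=g^{i\bar j}(\partial_{\bar j}\log\psi_0)\,\partial_i$ the $(1,0)$-part of the Riemannian gradient of $\log\psi_0$, which I would compute explicitly from the Bergman metric of the Siegel domain; it is tied to the hyperbolic (anisotropic dilation) direction $w\mapsto(t^{1/2}w',tw_n)$ of the title. Note $V_0$ is not itself holomorphic, e.g.\ $V_0=-(w+\bar w)\,\partial_w$ when $n=1$, so $V_0 h$ is merely a function.

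The model case $n=1$ shows the mechanism and should guide the general argument. There the identity reads $\rho\,|h'|^2=4\operatorname{Re}(h')$ with $\rho=\operatorname{Re}w$, equivalently $\operatorname{Re}(4/h'-w)=0$; since $4/h'-w$ is holomorphic it equals an imaginary constant $ic_0$, whence $h(w)=4\log(w+ic_0)$, and a direct check identifies this with $\log\psi_0\circ F-\log\psi_0$ for the rotation-type isotropy $F$ (indeed $\tfrac{1+e^{i\theta}}{1-e^{i\theta}}=i\cot(\theta/2)$). For general $n$ the plan is the same in spirit: using the nonnegativity of $\|\partial h\|_\omega^2$ and the explicit form of $V_0$, I would rearrange the identity into the vanishing of the real part of an explicit holomorphic expression built from $\partial h$, force that expression to be constant, and integrate to obtain $h$. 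The expected output is the $(2n-1)$-real-parameter family indexed by the last row of a unitary matrix $U$, namely the coboundaries $h_U=2(n+1)\log\tfrac{1-(Uz)_n}{1-z_n}$ pulled back to $\mathbb H^n$, which are exactly the coboundaries of $K$.

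Matching the solution set of the functional equation with $\{\log\psi_0\circ F-\log\psi_0:F\in K\}$ then finishes the proof; decomposing $\mathfrak{aut}(\mathbb H^n)$ into its parabolic, hyperbolic and isotropy parts and verifying that only the isotropy coboundaries solve the equation yields the stated normalization at $(0,\dots,0,1)$. I expect the main obstacle to be precisely the preceding step in dimension $n\ge2$: the Bergman metric of $\mathbb H^n$ is non-diagonal and couples the $n-1$ isotropic coordinates $w_1,\dots,w_{n-1}$ (the parabolic directions) to the single coordinate $w_n$ (the hyperbolic direction), so the clean one-variable reduction $\operatorname{Re}(4/h'-w)=0$ must be replaced by a genuinely multivariable computation. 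Controlling these cross terms and showing that an admissible $h$ cannot acquire a component transverse to the isotropy family is where the real work lies.
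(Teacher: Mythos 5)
Your setup is correct and coincides with the paper's: writing $\log\psi=\log\psi_0+\RE h$ with $h$ holomorphic, invoking Theorem~\ref{uniqueness of constant} to pin the constant at $n+1$, and reducing the hypothesis to the functional equation $\RE\ip{\partial h,\partial\log\psi_0}_{\omega_{\mathbb H^n}}+\tfrac14\|\partial h\|^2_{\omega_{\mathbb H^n}}\equiv 0$, which is precisely \eqref{eq_constant norm} with $f=h/2$. Your treatment of the model case $n=1$ is also correct, and in fact it honestly exhibits the full solution family $h=4\log(w+\sqrt{-1}c_0)+\mathrm{const}$, i.e.\ exactly the isotropy coboundaries; this is a sharper accounting than the paper's own statement of Theorem~\ref{1-dim thm} (whose proof silently composes with an isotropy automorphism), and it explains why Theorem~\ref{main1} can only be stated up to the isotropy group.

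The genuine gap is that for $n\ge 2$ --- which is the actual content of Theorem~\ref{main1}, the case $n=1$ being in \cite{CLY2020} --- you offer no argument. ``Rearrange the identity into the vanishing of the real part of a holomorphic expression, force it to be constant, and integrate'' is exactly the step you yourself flag as ``where the real work lies,'' and no mechanism is proposed for handling the cross terms in \eqref{eq_constant norm} coupling $\partial f/\partial w_j$, $j<n$, to $\partial f/\partial w_n$. The paper never solves the functional equation globally at all; its key input, absent from your proposal, is the complete holomorphic vector field $W=\sqrt{-1}\,\psi^{-n/(n+1)}\,\textup{grad}(\psi)$ of \cite{Choi-Lee2021}, which satisfies $(\RE W)\log\psi\equiv 0$. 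This is the bridge from the analytic hypothesis to the Lie algebra $\aut(\mathbb H^n)$: after an isotropy normalization making $W$ vanish at infinity, $W$ lies in the span of $D$, $T$, $T^{2,k}$, $T^{3,k}$ and the unitary fields; the pushforward formulas \eqref{P_s}--\eqref{S} (including solving the linear system \eqref{matrix}) put $W$ into normal form; and two slice lemmas finish the proof: Lemma~\ref{H_lemma} shows a constant-norm potential cannot be annihilated by $\RE(D+U)$ (killing the hyperbolic component), and Lemma~\ref{P_lemma} shows that annihilation by $\RE(T+U)$ forces $\psi=r\psi_0$. Notably, those lemmas use \eqref{eq_constant norm} only along the slice $\{(0,\dots,0,w_n)\}$, where it degenerates to a one-variable computation of the kind you carried out --- that is how the paper tames the multivariable coupling you identify as the obstacle. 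Without this vector-field input, or a genuine substitute that converts your equation into rigidity in all $n$ variables, your outline does not prove the theorem.
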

Theorem~\ref{main1} can be rephrased on $\mathbb B^n$ by the Cayley transformation in the subsequent way:
\begin{corollary}\label{thm_ball}
Any K\"ahler potential of the Bergman--Poincar\'e metric on the unit ball, which has constant differential norm, is $\log\varphi_0$ up to the isotropy subgroup of $\mathbb B^n$ at $0$.
\end{corollary}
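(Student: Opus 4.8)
The plan is to deduce Corollary~\ref{thm_ball} directly from Theorem~\ref{main1} by transporting both the hypothesis and the conclusion across the Cayley transform $\mathcal C\colon\mathbb B^n\to\mathbb H^n$. The two underlying metrics are related by $\mathcal C^*\omega_{\mathbb H^n}=\omega_{\mathbb B^n}$, since each is the invariant K\"ahler--Einstein metric of Ricci curvature $-1$ and $\mathcal C$ is a biholomorphism between the domains; in particular $\mathcal C$ is an isometry for these metrics. The key structural observation is that $\mathcal C(0)=(0,\dots,0,1)$, so conjugation by $\mathcal C$ intertwines $\Aut(\mathbb B^n)$ and $\Aut(\mathbb H^n)$ and carries the isotropy subgroup of $\mathbb B^n$ at $0$ onto the isotropy subgroup of $\mathbb H^n$ at $(0,\dots,0,1)$.

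First I would take an arbitrary K\"ahler potential $\log\varphi$ of $\omega_{\mathbb B^n}$ with constant differential norm and transport it to $\mathbb H^n$ by setting $\psi:=\varphi\circ\mathcal C^{-1}$. Because holomorphic pullback commutes with $\sqrt{-1}\,\partial\overline\partial$, one has $\sqrt{-1}\,\partial\overline\partial\log\psi=(\mathcal C^{-1})^*\big(\sqrt{-1}\,\partial\overline\partial\log\varphi\big)=(\mathcal C^{-1})^*\omega_{\mathbb B^n}=\omega_{\mathbb H^n}$, so $\log\psi$ is a K\"ahler potential of $\omega_{\mathbb H^n}$. Moreover the pointwise differential norm is a biholomorphic invariant, so that $\|\partial\log\psi\|_{\omega_{\mathbb H^n}}(w)=\|\partial\log\varphi\|_{\omega_{\mathbb B^n}}\big(\mathcal C^{-1}(w)\big)$ is again constant on $\mathbb H^n$. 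Hence $\psi$ meets the hypotheses of Theorem~\ref{main1}.

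Applying Theorem~\ref{main1}, I obtain an element $g$ of the isotropy subgroup of $\mathbb H^n$ at $(0,\dots,0,1)$ with $\log\psi=\log\psi_0\circ g$ (any residual additive pluriharmonic ambiguity being preserved under holomorphic pullback). Pulling this back by $\mathcal C$ and using both $\log\varphi=\log\psi\circ\mathcal C$ and the defining relation $\log\varphi_0=\log\psi_0\circ\mathcal C$, I would rewrite $\log\varphi=\log\psi_0\circ g\circ\mathcal C=\log\psi_0\circ\mathcal C\circ\big(\mathcal C^{-1}g\,\mathcal C\big)=\log\varphi_0\circ h$, where $h:=\mathcal C^{-1}g\,\mathcal C\in\Aut(\mathbb B^n)$ fixes $0$ by the equivariance noted above. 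This is precisely the assertion that $\log\varphi$ equals $\log\varphi_0$ up to the isotropy subgroup of $\mathbb B^n$ at $0$.

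Since all of the analytic content is already contained in Theorem~\ref{main1}, I do not expect any genuine obstacle here: the argument is essentially bookkeeping across $\mathcal C$. The only point deserving a small explicit check is the equivariance claim---that $\mathcal C$ conjugates the two isotropy subgroups---which follows immediately from $\mathcal C(0)=(0,\dots,0,1)$ together with the fact that $\mathcal C$ intertwines the automorphism groups of the two domains.
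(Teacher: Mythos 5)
Your proof is correct and is essentially the paper's own argument: the paper states Corollary~\ref{thm_ball} as a direct rephrasing of Theorem~\ref{main1} via the Cayley transform $\mathcal C$, using exactly the isometry $\mathcal C^*\omega_{\mathbb H^n}=\omega_{\mathbb B^n}$ and the fact that conjugation by $\mathcal C$ carries the isotropy subgroup at $(0,\dots,0,1)\in\mathbb H^n$ to the isotropy subgroup of $\mathbb B^n$ at $0$. Your bookkeeping of the pullbacks and the equivariance of the isotropy groups is precisely what the paper leaves implicit.
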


For $n=1$, Theorem~\ref{main1} and Corollary~\ref{thm_ball} are proved under certain conditions 
in \cite[Theorem 1.2] {CLY2020}. In the same paper, as an application, the authors showed that if $\Omega$ is a simply connected, proper domain in $\mathbb C$ and $\omega_\Omega:= \sqrt{-1}\lambda dz\wedge d\overline z$ is a complete Hermitian metric with the Gaussian curvature $-1$, then $\Omega$ is affine equivalent to $\mathbb H^1$ if and only if $\|\partial \log\lambda\|_{\omega_{\Omega}}$ is a constant. By the Riemann mapping theorem, note that such $\Omega$ is biholomorphic to the unit disc $\Delta$.

Now let $\Omega$ be a bounded domain in $\mathbb C^n$ such that $\Omega$ is biholomorphic to the unit ball $\mathbb B^n$ and equip the complete K\"ahler--Einstein metric $\omega_\Omega= \sqrt{-1}\sum_{i,j=1}^ng_{i\bar j}dz_i\wedge d\overline z_j$ with Ricci curvature $-1$. Since $\omega_\Omega$ has constant Ricci curvature, the natural K\"ahler potential is $\log\det(g_{i\bar j})$.
However, unlike the case $n=1$, the condition $\|\partial\log\det g_{i\bar j}\|_{\omega_\Omega}\equiv c$ does not imply that $\Omega$ is affine equivalent to $\mathbb H^n$.
For example, consider a holomorphic map $\widetilde{\mathcal C}\colon\mathbb B^n\to \mathbb C^n$ given by 
\begin{equation}\nonumber
    \widetilde{\mathcal C}(z) = \left( \frac{z_1}{1-z_n} + g_1(z_n), \cdots, \frac{z_{n-1}}{1-z_n}+g_{n-1}(z_n), \frac{1+z_n}{1-z_n}\right)
\end{equation}
for some holomorphic functions $g_j\colon \Delta\to\mathbb C$, $j=1,\ldots, n-1$.
Let $\Omega:=\widetilde{\mathcal C}(\mathbb B^n)$ be the image of $\widetilde{\mathcal C}$. Then by a simple calculation we obtain $\det \partial\widetilde{\mathcal C} = (1-z_n)^{-(n+1)}$ and $\widetilde{\mathcal C}$ is a biholomorphism onto $\Omega$.
Then by \eqref{isometry} we have $$
\det g_{i\bar j}\circ \widetilde{\mathcal C} = \frac{|1-z_n|^{2(n+1)}}{(1-|z|^2)^{n+1}},
$$
and it implies that $\|\partial \log\det g_{i\overline j}\|_{\omega_\Omega}$ is a constant because $\widetilde{\mathcal C}$ is an isometry.
However we can choose $g_j$, $j=1,\ldots, n-1$, such that $\Omega$ is not affine equivalent to $\mathbb H^n$.
Note that it is possible to have convex $\Omega$.
Based on this observation we need some strong conditions to characterize the domain biholomorphic to the unit ball in the sense of affine equivalence. 

\smallskip
Our second main theorem is as follows:
\begin{theorem}\label{main2}
Any affine homogeneous domain in $\mathbb C^n$ biholomorphic to the unit ball is affine equivalent to $\mathbb H^n$.
\end{theorem}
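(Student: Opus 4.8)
The plan is to read off a Siegel normal form for $\Omega$ from the canonical potential together with the affine symmetries. First I would take the K\"ahler--Einstein metric $\omega_\Omega$ of Ricci curvature $-1$ (the pull-back of $\omega_{\mathbb{B}^n}$) and its canonical potential $\Phi=\log\det(g_{i\bar j})$. For an affine automorphism $\phi(z)=Az+b$ of $\Omega$, biholomorphic invariance of $\omega_\Omega$ makes $\phi$ an isometry, and the transformation rule of the metric determinant gives $\Phi\circ\phi=\Phi-\log|\det A|^2$; hence $\partial(\Phi\circ\phi)=\partial\Phi$ and, $\phi$ being an isometry, $\|\partial\Phi\|_{\omega_\Omega}$ is $\Aff(\Omega)$-invariant, so constant by transitivity (its value being $n+1$ by Theorem~\ref{uniqueness of constant}). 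The object I would really track is the positive relative invariant $\rho:=(\det g_{i\bar j})^{-1/(n+1)}=e^{-\Phi/(n+1)}$, which obeys $\rho\circ\phi=|\det A|^{2/(n+1)}\rho$; differentiating along the flow of a complete affine holomorphic vector field $X$ in the Lie algebra $\mathfrak g$ of $\Aff(\Omega)^{\circ}$ yields $X\rho=\chi(X)\,\rho$ for a real Lie-algebra character $\chi$, so every affine field acts on $\rho$ as an eigenfunction.

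Next I would exploit that $\aut(\Omega)\cong\mathfrak{su}(n,1)$ has real rank one. Since $\rho$ ranges over all of $(0,\infty)$ while the subgroup $\ker\chi$ preserves it, transitivity forces $\chi\not\equiv0$, and the semisimple part of a field with $\chi\neq0$ is hyperbolic; thus $\mathfrak g$ contains a hyperbolic field $H$. As $H$ is affine, $H=Bz+c$ with $B$ invertible, so after an affine change of coordinates placing the zero of $H$ at the origin, $H$ becomes linear with $e^{tB}$ expanding, normalized so that $H\rho=2\rho$. Then $\rho$ is weighted homogeneous of degree $2$ for the expanding flow $\exp(tH)$, and since $\rho$ is real-analytic (interior regularity of the Monge--Amp\`ere equation for $\Phi$), only finitely many monomials can occur: $\rho$ is a \emph{polynomial}. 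Matching this against the requirement that $\{\rho>0\}$ be strictly pseudoconvex and biholomorphic to $\mathbb{B}^n$ should pin the eigenvalues of $B$ to $1$ (with multiplicity $n-1$) and $2$, so that in suitable linear coordinates $z=(z',z_n)$ the function $\rho$ is a real quadratic of the form $\rho=\RE\big(\alpha z_n+q(z')\big)-H_0(z',\bar z')$, where $q$ is a holomorphic quadratic form in $z'$ and $H_0$ is a Hermitian form.

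Finally I would bring in the parabolic part of $\mathfrak g$: the nilpotent (Heisenberg-type) fields lie in $\ker\chi$ and preserve $\rho$, and the induced transitivity along the $z'$-directions should force $\alpha\neq0$ and $H_0$ positive definite, after which a last linear normalization gives the model $\rho=\RE z_n-|z'|^2$ and hence $\Omega=\{\rho>0\}$ as the affine image of $\mathbb{H}^n$. I expect the decisive obstacle to be the elimination of the pluriharmonic term $\RE q(z')$: an affine change of coordinates cannot absorb a genuine holomorphic quadratic, so one must show, using the precise commutation relations of the hyperbolic field $H$ with the parabolic fields in $\mathfrak g$, that $q$ is forced to vanish. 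This is exactly the step that separates $\mathbb{H}^n$ from the non-affinely-equivalent domains produced by $\widetilde{\mathcal C}$ in the discussion preceding the theorem, where the weaker hypothesis of constant differential norm leaves such lower-order terms uncontrolled; it is the affine homogeneity, through the full solvable structure of $\mathfrak g$, that must be used both to kill these terms and to guarantee that $\Omega$ is the entire set $\{\rho>0\}$ rather than a proper invariant subset of it.
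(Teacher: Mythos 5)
Your proposal is a program rather than a proof: at the decisive points it says the eigenvalues of $B$ ``should'' be pinned to $(1,\dots,1,2)$, that transitivity ``should'' force $\alpha\neq 0$ and $H_0>0$, and, by your own admission, it leaves open both the elimination of the pluriharmonic term $\RE q(z')$ and the equality $\Omega=\{\rho>0\}$. That last admission is fatal, because killing exactly such pluriharmonic terms is the entire content of the theorem: the domains $\widetilde{\mathcal C}(\mathbb B^n)$ discussed just before the statement satisfy every soft consequence you derive (K\"ahler--Einstein potential with constant differential norm $n+1$, a relative invariant $\rho$ on which vector fields act by a character), yet they are not affinely equivalent to $\mathbb H^n$; an argument that stops where yours stops cannot separate the two cases. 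There are also concrete gaps earlier in the chain: the polynomiality claim does not follow as stated, since the zero of a hyperbolic affine field lies on $\partial\Omega$, not in $\Omega$ (for $\mathbb H^n$ the field $D$ vanishes at $0\in\partial\mathbb H^n$), so real-analyticity of $\rho$ \emph{inside} $\Omega$ gives no Taylor expansion at the fixed point to truncate by weighted homogeneity; moreover the invertibility of $B$, the positivity of its eigenvalues (needed for ``expanding'' and for finiteness of the weight decomposition), and their normalization are asserted, not proved --- a priori a field with $\chi\neq0$ in $\mathfrak{su}(n,1)$ realized affinely could have weights of mixed sign.

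The paper closes precisely these gaps by a mechanism your plan never invokes: the uniqueness theorem for constant-norm potentials (Theorem~\ref{main1}, equivalently Corollary~\ref{thm_ball}). Affine homogeneity gives $\|\partial\log K_\Omega\|^2_{\omega_\Omega}\equiv\mathrm{const}$ (this much you also prove); then pulling back by $F=G\circ\mathcal F$ and applying the uniqueness theorem identifies the pulled-back potential with $\psi_0$ \emph{exactly}, after an isotropy. Proposition~\ref{prop:1}(2) then converts ``affine field on $\Omega$'' into ``pushforward of an affine field of $\mathbb H^n$'', i.e.\ $\aff(\Omega)\subset F_*\aff(\mathbb H^n)$, and transitivity at one point produces the specific fields $\widehat T=F_*T$, $\widehat D=F_*D\in\aff(\Omega)$. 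Writing out $F_*T=\widehat T$, $F_*D=\widehat D$ as a first-order system and normalizing forces $F=\mathrm{id}$, hence $\Omega=\mathbb H^n$ up to affine maps. It is this potential-rigidity, not the Lie-algebra structure of $\aff(\Omega)$ alone, that annihilates the lower-order terms you could not remove; to salvage your route you would need a substitute of comparable strength, and it is hard to see one that does not in effect reprove Theorem~\ref{main1}.
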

To prove Theorem~\ref{main2}, we exploit the Lie algebra structure of homogeneous Siegel domains of the second kind, which was thoroughly studied by Kaup--Matsushima--Ochiai in \cite{KMO1970}. In particular, we characterize the complete holomorphic vector fields, which consist of the basis of $\aff(\mathbb H^n)$.

\medskip

In \cite{Mok-Tsai1992}, Mok--Tsai showed that if a bounded symmetric domain $B\subset \mathbb C^n$ with rank
$r\geq 2$ is biholomorphic to an unbounded convex domain, then the map should be a Cayley transform up to automorphisms of $B$ and affine  transformations of $\mathbb C^n$. 
To prove it, they showed that the biholomorphism can be extended as an automorphism of the compact dual of $B$. Next, they figured out what this automorphism is. 
In the case of $\mathbb B^n$, the bounded symmetric domain with rank $1$, we can readily see that the first part of the proof does not hold. 
On the other hand, we realized that it is possible to characterize the automorphisms of $\mathbb C\mathbb P^n$, which maps $\mathbb B^n$ onto the domain of which the K\"ahler potential has constant differential norm.

Let $\mathbb C\mathbb P^n$ be the complex projective space of dimension $n$ and let $[z]=[z_1,\ldots, z_{n+1}]$ be its homogeneous coordinates. Then, any nondegenerate $(n+1)\times (n+1)$ matrix $A=(A_{ij})$ can be considered as a holomorphic automorphism of $\mathbb C\mathbb P^n$ given by \begin{equation}\label{linear map}
[z]\mapsto [Az] = \left[\sum_j A_{1j}z_j, \cdots,
\sum_j A_{n+1 j}z_j\right].
\end{equation}
Consider a canonical embedding of $\mathbb C^n$ into $\mathbb C\mathbb P^n$ given by $(z_1,\ldots, z_n)\hookrightarrow [z_1,\ldots, z_n, 1]$. Then the realization of the map \eqref{linear map} on $\mathbb C^n$ is given by
\begin{equation}\label{Mobius transform}
z\in \mathbb C^n \mapsto \left[\frac{\sum_j A_{1j}z_j}{\sum_j A_{n+1 j}z_j}, \cdots,
\frac{\sum_j A_{nj}z_j}{\sum_j A_{n+1 j}z_j}\right].
\end{equation}
This map of the form \eqref{Mobius transform} is called a {\it M\"obius transformation}.

Our last theorem is as follows:
\begin{theorem}\label{main3}
Let $\Omega$ be a domain in $\mathbb C^n$ and $G\colon \mathbb B^n\to \Omega$ be a biholomorphism which is a restriction of a M\"obius transformation.
Let $\omega_\Omega = \sum_{i,j=1}^n g_{i\bar j} dz_i\wedge d\overline z_j$ be the K\"ahler--Einstein metric on $\Omega$ with the Ricci curvature $-1$.
Suppose that 
$
\| \partial\log\det (g_{i\bar j})\|_{\omega_\Omega}
$ 
is constant. Then $\Omega$ and $G$ are affine equivalent to $\mathbb H^n$ and $\mathcal C$, respectively.
\end{theorem}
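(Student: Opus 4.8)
The plan is to transport the hypothesis to the unit ball through $G$ and then to invoke Corollary~\ref{thm_ball}. Since $G\colon\mathbb B^n\to\Omega$ is a biholomorphism and both $\omega_{\mathbb B^n}$ and $\omega_\Omega$ are \emph{the} complete K\"ahler--Einstein metric of Ricci curvature $-1$, uniqueness of such a metric forces $G^*\omega_\Omega=\omega_{\mathbb B^n}$, so $G$ is an isometry. Consequently $\eta:=\log\det(g_{i\bar j})\circ G$ satisfies $\sqrt{-1}\partial\overline\partial\eta=\omega_{\mathbb B^n}$, and because $G$ is an isometry the quantity $\|\partial\eta\|_{\omega_{\mathbb B^n}}=\|\partial\log\det(g_{i\bar j})\|_{\omega_\Omega}\circ G$ is constant. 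Thus $\eta$ is a K\"ahler potential of $\omega_{\mathbb B^n}$ with constant differential norm, and Corollary~\ref{thm_ball} produces a unitary $U$ in the isotropy subgroup of $\mathbb B^n$ at $0$ with $\eta=\log\varphi_0\circ U$ up to an additive real constant, where $(\log\varphi_0\circ U)(z)=\log\varphi_0(Uz)$.

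Next I would make the two descriptions of $\eta$ collide. The transformation rule for the Einstein volume form (equation~\eqref{isometry}) gives $\det(g_{i\bar j})\circ G\,|\det\partial G|^2=\det(g^{\mathbb B^n}_{i\bar j})$, whence $\eta=\log\det(g^{\mathbb B^n}_{i\bar j})-2\RE\log\det\partial G$; here $\log\det\partial G$ is single-valued and holomorphic because $\det\partial G$ is holomorphic and nowhere zero on the simply connected domain $\mathbb B^n$. Using $\log\det(g^{\mathbb B^n}_{i\bar j})=-(n+1)\log(1-|z|^2)$ up to a constant, the defining relation $\varphi_0(w)=|1-w_n|^{2(n+1)}(1-|w|^2)^{-(n+1)}$, and the identity $|Uz|^2=|z|^2$, the two $\log(1-|z|^2)$ terms cancel and I am left with $-2\RE\log\det\partial G=2(n+1)\RE\log\bigl(1-(Uz)_n\bigr)$ up to a real constant. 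Since two holomorphic functions with equal real parts on a connected set differ only by an imaginary constant, exponentiating yields $\det\partial G=C\bigl(1-(Uz)_n\bigr)^{-(n+1)}$ for a nonzero constant $C$, where $(Uz)_n$ denotes the $n$-th coordinate of $Uz$.

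The final step is to read off the shape of the M\"obius map from its Jacobian. Writing $G$ in the form~\eqref{Mobius transform} with matrix $A$ and denominator linear form $\ell(z)=\sum_j A_{n+1\,j}z_j$, the classical identity $\det\partial G=\det A\,\ell(z)^{-(n+1)}$ together with the previous display forces $\ell$ to be a constant multiple of $1-(Uz)_n$. After precomposing $G$ with the rotation $z\mapsto U^{-1}z$, which is an automorphism of $\mathbb B^n$ fixing $0$, I may assume the denominator of $G$ is proportional to $1-z_n$, exactly the denominator of the Cayley transform $\mathcal C$. I would then conclude by the hyperplane-at-infinity criterion: a M\"obius transformation is affine precisely when it preserves the hyperplane at infinity. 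The map $\mathcal C^{-1}$ sends the hyperplane at infinity $\{w_{n+1}=0\}$ of the target chart onto the pole hyperplane $\{z_n=1\}$ of $\mathcal C$, which is also the pole hyperplane of the normalized $G$; hence the composite $\Phi:=\bigl(G\circ(z\mapsto U^{-1}z)\bigr)\circ\mathcal C^{-1}$ carries the hyperplane at infinity to the hyperplane at infinity and is therefore affine. A direct substitution, in which the factors $w_n+1$ cancel, confirms this. Since $z\mapsto U^{-1}z$ preserves $\mathbb B^n$, the affine map $\Phi$ carries $\mathbb H^n$ onto $\Omega$, so $\Phi^{-1}$ is an affine equivalence of $\Omega$ with $\mathbb H^n$, and $G=\Phi\circ\mathcal C\circ(z\mapsto Uz)$ exhibits $G$ as affine equivalent to $\mathcal C$.

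The main obstacle is the reduction in the first two paragraphs. The appeal to Corollary~\ref{thm_ball} (equivalently Theorem~\ref{main1}) is precisely what replaces the Mok--Tsai compact-dual extension argument, which fails for the rank-one domain $\mathbb B^n$; it is what converts the metric hypothesis into the rigid conclusion that the Jacobian determinant of $G$ is a pure power of a single linear form. Once that rigidity is available, the identification of $G$ with a Cayley transform is an essentially algebraic computation with projective linear maps, and the only care required is the bookkeeping of normalizing constants and of the unitary factor $U$ coming from the isotropy subgroup.
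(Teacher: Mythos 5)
Your proposal is correct, and its first half coincides with the paper's own argument: both use uniqueness of the complete K\"ahler--Einstein metric to make $G$ an isometry, pull the potential $\log\det(g_{i\bar j})$ back to $\mathbb B^n$, and apply Theorem~\ref{uniqueness of constant} and Corollary~\ref{thm_ball} to force $\det dG = C\,(1-(Uz)_n)^{-(n+1)}$ (the paper absorbs the unitary $U$ by composing with an isotropy at $0$, while you carry it along explicitly --- these are equivalent). Where you genuinely diverge is the endgame. The paper normalizes $G(0)=(0,\ldots,0,1)$, $dG(0)=\mathrm{diag}(1,\ldots,1,2)$, $a_{n+1,n+1}=1$, and then computes the Jacobian of the M\"obius map entry by entry, forcing $a_{n+1,j}=0$ and $a_{n,j}=0$ for $j\leq n-1$ and finally $a_{n+1,n+1}=-a_{n+1,n}$, to land on $G=\mathcal C$ up to rotation. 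You instead invoke the classical identity $\det dG=\det A\cdot\ell(z)^{-(n+1)}$ for the denominator form $\ell$ of a M\"obius transformation, conclude that $\ell$ is proportional to $1-(Uz)_n$, and then use the projective criterion that a M\"obius transformation is affine if and only if it preserves the hyperplane at infinity, so that $\bigl(G\circ U^{-1}\bigr)\circ\mathcal C^{-1}$ is affine. This is shorter and more conceptual: it replaces the paper's page of matrix bookkeeping by two classical facts, and it makes transparent why the natural conclusion is ``affine equivalent to $\mathcal C$'' rather than ``equal to $\mathcal C$ on the nose.'' The price is that both facts sit outside the paper and should be justified: the Jacobian identity follows in one line by writing $A$ in block form with upper-left $n\times n$ block $P$, last row $(q^{T}, d)$, and last column $(p,d)$, and applying the matrix determinant lemma together with the Schur complement formula, which gives $\det dG=\det P\,(d-q^{T}P^{-1}p)\,\ell^{-(n+1)}=\det A\,\ell^{-(n+1)}$; and the passage from $\ell^{\,n+1}\propto(1-(Uz)_n)^{n+1}$ to $\ell\propto 1-(Uz)_n$ holds because the ratio of the two affine-linear forms is holomorphic with constant $(n+1)$st power, hence constant. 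With those two lines added, your argument is complete and rigorous.
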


This paper is organized as follows: 
Section~\ref{1-dim} presents the proof for the uniqueness of the K\"ahler potential of the Poincar\`e metric on $\mathbb H^1$ up to automorphisms.
Section~\ref{preliminaries} describes the geometry of $\mathbb H^n$, for example,  K\"ahler--Einstein metrics of $\mathbb H^n$, K\"ahler potentials and parabolic/hyperbolic vector fields will be presented. 
Moreover,
we will prove that the K\"ahler potential with constant differential norm of the Bergman metric on $\mathbb H^n$, $n\geq 2$ is unique up to automorphisms.
Subsection~\ref{hyperbolic} and \ref{parabolic} present some properties of the K\"ahler potential that vanishes by the real part of the hyperbolic and parabolic vector fields, respectively. 
Section~\ref{proof of main1} proves Theorem~\ref{main1}.
Section~\ref{Lie algebra structure of Hn} characterizes the complete holomorphic vector fields that correspond to the Lie algebra decomposition of $\aut(\mathbb H^n)$ given by Kaup--Matsushima--Ochiai in \cite{KMO1970}. 
Section~\ref{proof of main23} proves Theorem~\ref{main2} and Theorem~\ref{main3}.

\bigskip

{\bf Acknowledgement} This work was supported by Samsung Science and Technology Foundation under Project Number SSTF-BA2201-01. The  second  named author was supported by the National Research Foundation of Korea (NRF) grant funded by the Korea government (No. NRF-2022R1F1A1063038)

\section{Uniqueness of the constant when $n=1$}\label{1-dim}
Let $\mathbb H:=\mathbb H^1:=\{w\in \mathbb C : \text{Re}\,w>0\}$ be the right half plane and $\omega_{\mathbb H}:= -2\sqrt{-1}\partial\bar\partial\log \text{Re}\, w$ be the K\"ahler--Einstein metric on $\mathbb H$.
Let $\log\psi_0(w)=\log \frac{1}{(\text{Re}\, w)^2}$ be the canonical K\"ahler potential of $\omega_{\mathbb H}$.
Note that $\|\partial \log \psi_0\|_{\omega_{\mathbb H}}^2 = 2$.
In this section we will denote $\frac{\partial f}{\partial w}$ by $f'$ for simplicity for any holomorphic function $f\colon \mathbb H\to \mathbb C$.
For a vector field $V$ we denote $\frac{V+\overline V}{2}$ by $\text{Re}\,V$.

\smallskip 
The aim of this section is to prove 
\begin{theorem}\label{1-dim thm}
    Let $\psi\colon \mathbb H\to\mathbb R$ be a function such that $\log\psi$ is a potential of the Bergman metric on $\mathbb H$. Suppose that 
    $$
    \|\partial \log\psi\|^2_{\omega_{\mathbb H}}\equiv c
    $$
    for some constant $c\in \mathbb R$.
    Then $c=2$ and $\psi=r\psi_0$ for some $r>0$.
\end{theorem}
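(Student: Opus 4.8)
The plan is to translate the statement into a question about a single holomorphic function and then integrate an explicit first–order equation. Since $\log\psi$ and the canonical potential $\log\psi_0=-2\log\operatorname{Re}w$ are both potentials of $\omega_{\mathbb H}$, their difference is a real harmonic function on the simply connected domain $\mathbb H$, so $\log\psi=\log\psi_0+\operatorname{Re}f$ for some holomorphic $f\colon\mathbb H\to\mathbb C$. Put $g:=f'$ and $\rho:=\operatorname{Re}w$. Using $g^{1\bar1}=2\rho^2$ one finds $\partial\log\psi=\bigl(\tfrac12g-\tfrac1\rho\bigr)dw$, hence
\begin{equation}\nonumber
\|\partial\log\psi\|_{\omega_{\mathbb H}}^2=2\rho^2\Bigl|\tfrac12g-\tfrac1\rho\Bigr|^2=\tfrac12\,|\rho g-2|^2.
\end{equation}
Therefore the hypothesis $\|\partial\log\psi\|^2\equiv c$ says exactly that the smooth map $h:=\rho g\colon\mathbb H\to\mathbb C$ has its entire image on the single \emph{fixed} circle $\{|\zeta-2|=\sqrt{2c}\}$.

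The key step is that a smooth map into a fixed one–real–dimensional curve has everywhere–degenerate real differential. Applied to $h$, this gives $\det_{\mathbb R}dh=|\partial_wh|^2-|\partial_{\bar w}h|^2=0$ on $\mathbb H$. Because $g$ is holomorphic, $\partial_{\bar w}h=\tfrac12g$ and $\partial_wh=\tfrac12g+\rho g'$, so the identity collapses to $\operatorname{Re}(g\overline{g'})=-\rho|g'|^2$. On the dense open set where $g'\neq0$ this reads $\operatorname{Re}(g/g')=-\rho=-\operatorname{Re}w$, i.e. the holomorphic function $g/g'+w$ has zero real part and hence equals an imaginary constant $i\beta$ with $\beta\in\mathbb R$. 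Integrating $g'/g=-1/(w-i\beta)$ yields $g=A/(w-i\beta)$ for some $A\in\mathbb C$; the complementary possibility $g'\equiv0$ makes $g$ a constant $a$, and then $|\rho a-2|^2\equiv2c$ over all $\rho>0$ forces $a=0$.

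Substituting the first candidate back closes the argument for the constant. Writing $\zeta=w-i\beta$ (so $\operatorname{Re}\zeta=\rho$) gives $\rho g-2=(\tfrac A2-2)+\tfrac A2\,\overline\zeta/\zeta$; as $w$ ranges over $\mathbb H$ the unimodular factor $\overline\zeta/\zeta$ traces a nondegenerate arc, so $|\rho g-2|$ can be constant only if the constant term $\tfrac A2-2$ vanishes, forcing $A=4$ and then $|\rho g-2|\equiv2$. Thus $c=2$ in every case (the case $g\equiv0$ giving $c=2$ as well), and $\psi$ equals either $r\psi_0$ (when $g\equiv0$) or $r\,|w-i\beta|^4\psi_0$ (when $g=4/(w-i\beta)$) for some $r>0$.

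It remains to rule out the second family and conclude $\psi=r\psi_0$. Writing $\sigma_\beta(w):=1/(w-i\beta)\in\operatorname{Aut}(\mathbb H)$, one checks $\operatorname{Re}\sigma_\beta=\rho/|w-i\beta|^2$, so $r|w-i\beta|^4\psi_0=r(\psi_0\circ\sigma_\beta)$; the interior equation alone therefore determines $\psi$ only up to this one–parameter automorphic family. To remove it I would invoke the boundary behaviour demanded of a potential of the Bergman metric: $\psi_0=\rho^{-2}$ blows up at every boundary point of $\mathbb H$, whereas $|w-i\beta|^4\psi_0\to0$ as $w\to i\beta\in\partial\mathbb H$ along the inward normal (there $|w-i\beta|^4$ vanishes to order $4$ against $\rho^2$ to order $2$). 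Requiring $\psi$ to blow up along all of $\partial\mathbb H$ then excludes $\beta$ entirely, leaving $g\equiv0$ and $\psi=r\psi_0$. I expect this last step to be the main obstacle: the pointwise norm identity is genuinely satisfied by the whole family $\psi_0\circ\sigma_\beta$, so pinning $\psi$ to $r\psi_0$ exactly requires global (boundary) input beyond the differential equation itself.
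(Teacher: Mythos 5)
Your route is genuinely different from the paper's and, up to its last step, correct and in fact sharper. The paper proceeds via the complete holomorphic vector field $W=\sqrt{-1}\,\psi^{-1/2}\operatorname{grad}\psi$ of Choi--Lee, normalizes $W$ by automorphisms so that it vanishes at infinity, writes $W=aD+bT$, and kills $a$ and then pins down $\psi$ using Lemma~\ref{1-dim hyperbolic} and Lemma~\ref{1-dim parabolic}; you instead integrate the first-order condition directly. Your identity $\|\partial\log\psi\|^2_{\omega_{\mathbb H}}=\tfrac12|\rho g-2|^2$ agrees with the paper's equation \eqref{1-dim_constant norm eq} (up to the harmless normalization $\log\psi-\log\psi_0=\operatorname{Re}f$ versus $f+\bar f$), the degenerate-Jacobian observation forcing $\operatorname{Re}(g\overline{g'})=-\rho|g'|^2$ is a nice elementary substitute for the paper's Lie-theoretic input, and the integration to $g\equiv0$ or $g=4/(w-i\beta)$ is sound (zeros of $g'$ are isolated, so $\beta$ is a single constant). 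Both branches give $c=2$, so your proof of the constant is complete, and your method buys something the paper's does not: the full solution set.

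That extra precision exposes the real issue, and you diagnosed it correctly: the second family cannot be excluded, because $\psi=r|w-i\beta|^4\psi_0=r\,\psi_0\circ\sigma_\beta$ genuinely satisfies both hypotheses (for $\beta=0$: $\partial\log\psi=(2/w-1/\rho)\,dw$ and $2\rho^2|2/w-1/\rho|^2=2|\bar w/w|^2\equiv2$), so the theorem as literally stated is false and can only hold up to $\Aut(\mathbb H)$. The paper's own proof concedes exactly this when it says ``by taking some isotropy automorphism of $\mathbb H$, we can assume that $W$ vanishes at infinity'': that replaces $\psi$ by $\psi\circ\gamma$, and this normalization is precisely what removes your family, since $W_{\psi_0\circ\sigma_\beta}=(\sigma_\beta^{-1})_*W_{\psi_0}$ is a parabolic field vanishing at $i\beta$ rather than at infinity. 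Compare the ``up to isotropy'' phrasing of Theorem~\ref{main1}; note also that downstream (Corollary~\ref{1-dim cor} as used to prove Theorem~\ref{uniqueness of constant}) only the automorphism-invariant conclusion $c=2$ is needed, so the flaw does not propagate. Your proposed repair --- requiring $\psi$ to blow up on all of $\partial\mathbb H$ --- is not available: no boundary condition appears among the hypotheses, so that step imports an assumption rather than proving the statement. In short, you have a complete and correct proof of ``$c=2$ and $\psi=r\,\psi_0\circ\sigma$ for some automorphism $\sigma$,'' which is what the paper actually establishes; the residual gap lies in the theorem's wording, not in your analysis, and your closing paragraph correctly identifies it as unbridgeable from the interior equation alone.
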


Consider the holomorphic vector fields of $\mathbb H$:
\begin{equation}\nonumber    
T := 2\sqrt{-1}\frac{\partial}{\partial w} \quad\text{ and } \quad
D:= 2w\frac{\partial}{\partial w},
\end{equation}
 which generate affine automorphisms:
 $$
 \mathcal T_s(w) := w+2\sqrt{-1}s
 \quad\text{ and } \quad
 \mathcal D_s(w) := e^{2s}w ,
 $$
 respectively, with $s\in \mathbb R$.
By a straightforward calculation, we derive
\begin{equation}\nonumber
(\text{Re}\, D) \log \psi_0=-2 \quad\text{ and }\quad (\text{Re}\, T)\log\psi_0 = 0
\end{equation}
and 
\begin{equation}\label{1-dim pushforward}
    (\mathcal T_s)_*D = D-2sT\quad\text{ and }\quad
    (\mathcal T_s)_* T = T.
\end{equation}
\begin{lemma}\label{1-dim hyperbolic}
    Let $\psi\colon\mathbb H\to\mathbb R$ be a K\"ahler potential of $\omega_{\mathbb H}$ with 
    $$
    \|\partial \log\psi\|^2_{\omega_{\mathbb H}}\equiv c_1 
    \quad\text{and } \quad
    (\textup{Re}\, D)\log \psi \equiv c_2
    $$
    for some constant $c_1$ and $c_2$. Then $c_1=2$ and $c_2=\pm 2$.
\end{lemma}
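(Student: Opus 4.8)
The plan is to reduce everything to the difference between $\log\psi$ and the canonical potential $\log\psi_0$, and then to let the two constancy hypotheses successively pin that difference down. First I would observe that, since $\log\psi$ and $\log\psi_0$ are both potentials of the same metric $\omega_{\mathbb H}$, their difference is harmonic on $\mathbb H$; as $\mathbb H$ is simply connected and one-dimensional, there is a holomorphic function $F\colon\mathbb H\to\mathbb C$ with $\log\psi=\log\psi_0+\RE F$. Everything then comes down to extracting information about $F'$ from the two hypotheses.

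Next I would use the hyperbolic condition $(\RE D)\log\psi\equiv c_2$. Since $(\RE D)\log\psi_0=-2$ and $D=2w\,\partial/\partial w$, a direct computation gives $(\RE D)\RE F=\RE(wF')$, so the hypothesis reads $\RE(wF')\equiv c_2+2$. The key point is that $wF'$ is holomorphic with constant real part, hence is itself a complex constant $\gamma$ with $\RE\gamma=c_2+2$. This already forces $F'=\gamma/w$, i.e. $\RE F$ is, up to an additive constant, a multiple of $\log|w|$.

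Finally I would substitute into the norm condition. Writing $\partial_w\log\psi=\partial_w\log\psi_0+\tfrac12 F'=-\frac{1}{\RE w}+\frac{\gamma}{2w}$ and using $g^{w\bar w}=2(\RE w)^2$, the squared norm becomes $\|\partial\log\psi\|^2_{\omega_{\mathbb H}}=\frac{|\gamma\,\RE w-2w|^2}{2|w|^2}$. Setting $w=u+\sqrt{-1}v$ and $\gamma=\gamma_1+\sqrt{-1}\gamma_2$ and demanding that this equal the constant $c_1$ for all $u>0$, one compares the coefficients of $u^2$, $uv$, and $v^2$ in the numerator against $2c_1(u^2+v^2)$. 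The $uv$-term forces $\gamma_2=0$, the $v^2$-term forces $c_1=2$, and the $u^2$-term then gives $(\gamma_1-2)^2=4$, i.e. $\gamma_1\in\{0,4\}$; since $\gamma_1=\RE\gamma=c_2+2$, this is exactly $c_2=\pm2$.

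I do not expect a serious obstacle here: the argument is essentially a rigidity computation once the reduction $\log\psi=\log\psi_0+\RE F$ is in place. The only point requiring care is the passage from ``$wF'$ has constant real part'' to ``$wF'$ is constant,'' which relies on simple connectivity of $\mathbb H$ together with the fact that a holomorphic function of constant real part is constant; after that, matching coefficients yields an overdetermined linear system whose unique consistent solutions produce precisely the claimed values of $c_1$ and $c_2$.
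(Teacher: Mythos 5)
Your proof is correct and follows essentially the same route as the paper: the same decomposition $\log\psi=\log\psi_0+\RE F$ (the paper writes $f+\bar f$ with $F=2f$), the same deduction that the hyperbolic condition forces $wF'$ to be a holomorphic function of constant real part and hence a constant $\gamma$ (the paper's $C=\gamma/2$), and the same substitution into the norm identity. Your final step of matching coefficients of $u^2$, $uv$, $v^2$ is just the real-coordinate version of the paper's step of applying $\partial^2/\partial w^2$ and $\partial^2/\partial w\partial\bar w$ to the resulting quadratic identity, so the two arguments coincide.
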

\begin{proof}
    Since $\log\psi$ and $\log\psi_0$ are the potential of $\omega_{\mathbb H}$, there exists a holomorphic function $f\colon \mathbb H\to \mathbb C$ such that 
    $\log\psi -\log\psi_0 = f + \bar f$.
    Since we have 
    \begin{equation}\nonumber 
        \begin{aligned}
            c_2 &= (\text{Re}\, D)\log\psi 
            =(\text{Re}\, D)(\log\psi_0 + f+\bar f)
            = -2 + \text{Re}\, (Df),
        \end{aligned}
    \end{equation}
    we have 
    \begin{equation}\label{1-dim f'} 
    wf'(w) = \frac{c_2+2}{2} + \sqrt{-1}\beta
    \end{equation}
    for some constant $\beta\in\mathbb R$. 
    Denote $C:= \frac{c_2+2}{2} + \sqrt{-1}\beta$. 
    Note that $c_2$ is a real number.

    On the other hand, since we have 
    \begin{equation}\label{1-dim_constant norm eq}
    \begin{aligned}
    c_1 &= \|\partial \log\psi\|^2_{\omega_{\mathbb H}}
    = \|\partial\psi_0 + \partial f\|^2_{\omega_{\mathbb H}}
    = \left| -\frac{1}{\text{Re}\, w} + f'\right|^2 2(\text{Re}\, w)^2\\
    &= 2 + 2(\text{Re}\, w)^2|f'|^2 - 4\text{Re}\, w\,\text{Re}\, f',
    \end{aligned}
    \end{equation}
    by \eqref{1-dim f'} we obtain
    \begin{equation}\label{1-dim eqn}
        (c_1-2)|w|^2 = 2 (\text{Re}\, w)^2 |C|^2 - 4\text{Re}\, w\,\text{Re}(C\bar w).
    \end{equation}
    By differentiating \eqref{1-dim eqn} with respect to $\frac{\partial^2}{\partial w^2}$ we obtain
    \begin{equation}\label{second deriv1}
        0= |C|^2 -2\overline C
    \end{equation}
    and by differentiating it again with respect to $\frac{\partial^2}{\partial w\partial \bar w}$ we obtain
    \begin{equation}\label{second deriv2}
        c_1-2 = |C|^2 - 2\text{Re}\,\overline C.
    \end{equation}
    By \eqref{second deriv1}, $C$ is real and hence by \eqref{second deriv1} and $\eqref{second deriv2}$ we obtain $c_1=2$ and $C=0$ or $2$. This implies that $c_2=\pm 2$.
\end{proof}

\begin{lemma}\label{1-dim parabolic}
    Let $\psi\colon\mathbb H\to\mathbb R$ be a K\"ahler potential of $\omega_{\mathbb H}$ with 
    $$
    \|\partial \log\psi\|^2_{\omega_{\mathbb H}}\equiv c_1 
    \quad\text{ and } \quad
    (\textup{Re}\, T)\log \psi \equiv c_2
    $$
    for some constant $c_1$ and $c_2$. Then $c_1=2$, $c_2=0$ and $\psi=r\psi_0$ for some $r>0$.
\end{lemma}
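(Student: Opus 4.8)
The plan is to mirror the argument for the hyperbolic field in Lemma~\ref{1-dim hyperbolic}, but the parabolic hypothesis will turn out to be far more rigid and will force $f$ to be constant in one stroke. Exactly as before, since both $\log\psi$ and $\log\psi_0$ are potentials of $\omega_{\mathbb H}$, I would begin by writing $\log\psi - \log\psi_0 = f + \overline f$ for a holomorphic $f\colon\mathbb H\to\mathbb C$, so that all the geometric data is encoded in $f'$.

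First I would evaluate the parabolic condition. Since $T = 2\sqrt{-1}\,\partial/\partial w$ annihilates $\overline f$ and $(\text{Re}\,T)\log\psi_0 = 0$, a one-line computation gives $(\text{Re}\,T)\log\psi = \text{Re}\bigl(T(f+\overline f)\bigr) = \text{Re}(2\sqrt{-1}\,f') = -2\,\text{Im}\,f'$. Thus the assumption $(\text{Re}\,T)\log\psi\equiv c_2$ says precisely that $\text{Im}\,f'\equiv -c_2/2$ is constant on $\mathbb H$.

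The key step is now purely complex-analytic: $f'$ is holomorphic and its imaginary part is constant, so its image is contained in a horizontal line and the open mapping theorem forces $f'\equiv C$ for a single constant $C\in\mathbb C$ with $\text{Im}\,C = -c_2/2$. This is where the parabolic case is genuinely simpler than the hyperbolic one—there is no need for the second-order differentiation trick used in Lemma~\ref{1-dim hyperbolic}, because the translation symmetry already pins $f'$ down to a constant by itself. I expect this rigidity observation to be the conceptual heart of the proof; everything after it is forced.

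Finally I would substitute $f'\equiv C$ into the constant-norm condition. Reusing the expansion of $\|\partial\log\psi\|^2_{\omega_{\mathbb H}}$ obtained in the proof of Lemma~\ref{1-dim hyperbolic}, one gets $c_1 = 2 + 2(\text{Re}\,w)^2|C|^2 - 4\,\text{Re}\,w\,\text{Re}\,C$. Writing $x = \text{Re}\,w$, the right-hand side is a polynomial of degree two in $x$ that must equal the constant $c_1$ for every $x>0$; comparing coefficients forces $2|C|^2 = 0$, hence $C = 0$, and then $c_1 = 2$. From $C=0$ we read off $\text{Im}\,C = 0$, i.e. $c_2 = 0$, while $f'\equiv 0$ means $f$ is a constant $a + \sqrt{-1}\,b$, so that $\log\psi - \log\psi_0 = f + \overline f = 2a$ and $\psi = e^{2a}\psi_0 = r\psi_0$ with $r = e^{2a}>0$. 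The only point deserving care is the passage from ``$\text{Im}\,f'$ constant'' to ``$f'$ constant,'' but this is immediate from the holomorphicity of $f'$.
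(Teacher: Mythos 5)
Your proposal is correct and follows essentially the same route as the paper: the same pluriharmonic decomposition $\log\psi-\log\psi_0=f+\overline f$, the same observation that $(\textup{Re}\,T)\log\psi\equiv c_2$ forces $f'$ to be a constant $C$ (the paper states this directly in its equation \eqref{1-dim f'_2}, with the holomorphicity argument you spell out left implicit), and the same coefficient comparison in the expansion $c_1=2+2(\textup{Re}\,w)^2|C|^2-4\,\textup{Re}\,w\,\textup{Re}\,C$ to conclude $C=0$, $c_1=2$, $c_2=0$ and $\psi=r\psi_0$. Your explicit justification of the passage from ``$\textup{Im}\,f'$ constant'' to ``$f'$ constant'' is a welcome clarification but not a different method.
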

\begin{proof}
Since $\log\psi$ and $\log\psi_0$ are the potential of $\omega_{\mathbb H}$, there exists a holomorphic function $f\colon \mathbb H\to \mathbb C$ such that 
    $\log\psi -\log\psi_0 = f + \bar f$.
    Since we have 
    \begin{equation}\nonumber 
        \begin{aligned}
            c_2 &= (\text{Re}\, T)\log\psi 
            =(\text{Re}\, T)(\log\psi_0 + f+\bar f)
            = \text{Re}\, (Tf),
        \end{aligned}
    \end{equation}
    we obtain 
    \begin{equation}\label{1-dim f'_2}
        f'(w) = \beta -\sqrt{-1}\frac{c_2}{2}=:C.
    \end{equation}
     On the other hand by \eqref{1-dim_constant norm eq} we have 
    \begin{equation}\nonumber
        c_1-2 = 2 (\text{Re}\, w)^2 |C|^2 - 4\text{Re}\, w\,\text{Re}\, C.
    \end{equation}
    By comparing the coefficient of monomials in $w$ and $\overline w$, we obtain $c_1=2$ and $C=0$ which implies $c_2=0$. 
    Since $f$ is a constant by \eqref{1-dim f'_2}, the proof is complete.
\end{proof}

\medskip
{\bf Proof of Theorem~\ref{1-dim thm}:}
    Under the assumptions of the theorem, there exists a nowhere vanishing holomorphic complete vector field $W$ on $\mathbb H$ given in \cite[Theorem~3.2]{Choi-Lee2021} satisfying $(\text{Re}\,W)\log \psi\equiv0$.
    By taking some isotropy automorphism of $\mathbb H$, we can assume that $W$ vanishes at infinity. Since the dimension of the automorphism group of $\mathbb H$ is $3$ and that of the Lie subgroup fixing infinity is $2$, 
    we can express $W$ as
    $$
    W = aD + bT
    $$
    for some constants $a,\,b\in \mathbb R$. 

    {\bf Step 1:} Suppose that $a\neq 0$. Using \eqref{1-dim pushforward} we obtain
    \begin{equation}
        (\mathcal T_s)_* W 
        = aD+(b-2sa)T.
    \end{equation}
    If we take $s=\frac{b}{2a}$, then we have $\widetilde W:=(\mathcal T_s)_* W = aD$.
    Furthermore, $\log\widetilde\psi$ is also a K\"ahler potential of $\omega_{\mathbb H}$ with $\widetilde \psi:= \psi\circ \mathcal T_{-s}\colon\mathbb H\to \mathbb R$ and $\|\partial\log\widetilde\psi\|^2_{\omega_{\mathbb H}}\equiv c$ since $\mathcal T_s$ is a holomorphic isometry with respect to $\omega_{\mathbb H}$.
    By Lemma~\ref{1-dim hyperbolic}, we obtain $(\text{Re}\,\widetilde W)\log\psi = a(\text{Re}\,D)\log\psi=\pm 2a$
    and this contradiction implies $a=0$.

    {\bf Step 2:} Since $W$ is nowhere vanishing, we have $W=bT$ with $b\neq 0$.
    This implies that $0=(\text{Re}\, W)\log\psi = b(\text{Re}\, T)\log\psi$ and hence by Lemma~\ref{1-dim parabolic}
    we have $c=2$ and $\psi=r\psi_0$. \qed
\begin{corollary}\label{1-dim cor}
    Let $\psi\colon \mathbb H\to\mathbb R$ be a function such that $\log\psi$ is a potential of the complete K\"ahler--Einstein metric $\widetilde \omega$ on $\mathbb H$ with Ricci curvature $-\kappa$ for $\kappa>0$. Suppose that 
    $$
    \|\partial \log\psi\|^2_{\widetilde\omega}\equiv c
    $$
    for some constant $c\in \mathbb R$.
    Then $c=\frac{2}{\kappa}$ and $\psi=r\psi_0^{1/\kappa}$ for some $r>0$.
\end{corollary}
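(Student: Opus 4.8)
The plan is to reduce the statement to Theorem~\ref{1-dim thm} by a rescaling argument, since the only new ingredient here is the Ricci constant $-\kappa$ in place of $-1$. First I would identify the metric $\widetilde\omega$ explicitly. The Ricci form of a Kähler metric is invariant under multiplication by a positive constant, because scaling $g_{i\bar j}$ by a constant does not affect $\sqrt{-1}\partial\bar\partial\log\det(g_{i\bar j})$. Since $\omega_{\mathbb H}$ has Ricci curvature $-1$, the metric $\tfrac{1}{\kappa}\omega_{\mathbb H}$ has Ricci form $\mathrm{Ric}(\tfrac{1}{\kappa}\omega_{\mathbb H}) = \mathrm{Ric}(\omega_{\mathbb H}) = -\omega_{\mathbb H} = -\kappa\bigl(\tfrac{1}{\kappa}\omega_{\mathbb H}\bigr)$, so it is a complete Kähler--Einstein metric with Ricci curvature $-\kappa$. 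By uniqueness of the complete Kähler--Einstein metric of prescribed negative Ricci curvature on $\mathbb H$ (the metric is the standard hyperbolic metric up to scale, the scale being fixed by $\kappa$), we conclude $\widetilde\omega = \tfrac{1}{\kappa}\omega_{\mathbb H}$.

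Next I would pass to a rescaled potential. Set $\Psi := \psi^{\kappa}$, so that $\log\Psi = \kappa\log\psi$. From $\sqrt{-1}\partial\bar\partial\log\psi = \widetilde\omega = \tfrac{1}{\kappa}\omega_{\mathbb H}$ we obtain
\begin{equation}\nonumber
\sqrt{-1}\partial\bar\partial\log\Psi = \kappa\sqrt{-1}\partial\bar\partial\log\psi = \omega_{\mathbb H},
\end{equation}
so $\log\Psi$ is a potential of $\omega_{\mathbb H}$. I would then track how the differential norm transforms. Writing $\omega_{\mathbb H} = \sqrt{-1}g\,dw\wedge d\bar w$ and $\widetilde\omega = \sqrt{-1}\tilde g\,dw\wedge d\bar w$ with $\tilde g = \tfrac{1}{\kappa}g$, the induced norms on $(1,0)$-forms satisfy $\|\alpha\|^2_{\widetilde\omega} = \tilde g^{-1}|a|^2 = \kappa\,g^{-1}|a|^2 = \kappa\|\alpha\|^2_{\omega_{\mathbb H}}$ for $\alpha = a\,dw$, consistently with the norm factor $2(\text{Re}\,w)^2$ used in \eqref{1-dim_constant norm eq}. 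Since $\partial\log\Psi = \kappa\,\partial\log\psi$, this yields
\begin{equation}\nonumber
\|\partial\log\Psi\|^2_{\omega_{\mathbb H}} = \kappa^2\|\partial\log\psi\|^2_{\omega_{\mathbb H}} = \kappa^2\cdot\tfrac{1}{\kappa}\|\partial\log\psi\|^2_{\widetilde\omega} = \kappa\,\|\partial\log\psi\|^2_{\widetilde\omega} \equiv \kappa c.
\end{equation}

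With this in hand, Theorem~\ref{1-dim thm} applies directly to $\Psi$: its constant differential norm forces $\kappa c = 2$, hence $c = \tfrac{2}{\kappa}$, and moreover $\Psi = r'\psi_0$ for some $r' > 0$. Translating back, $\psi^{\kappa} = r'\psi_0$, so $\psi = (r')^{1/\kappa}\psi_0^{1/\kappa} = r\,\psi_0^{1/\kappa}$ with $r := (r')^{1/\kappa} > 0$, which is the assertion. I do not expect a serious obstacle: the argument is essentially bookkeeping. The one point demanding care is the identification $\widetilde\omega = \tfrac{1}{\kappa}\omega_{\mathbb H}$, which relies on uniqueness of the complete Kähler--Einstein metric; and the bookkeeping of the two scaling factors (the factor $\kappa$ from $\partial\log\Psi = \kappa\,\partial\log\psi$ and the factor $\tfrac1\kappa$ from the change of metric in the norm) must be combined correctly to land on $\kappa c$ rather than $c$ or $c/\kappa$.
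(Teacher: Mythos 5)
Your proposal is correct and follows essentially the same route as the paper: identify $\widetilde\omega = \tfrac{1}{\kappa}\omega_{\mathbb H}$ by uniqueness of the complete K\"ahler--Einstein metric, observe that $\kappa\log\psi$ is then a potential of $\omega_{\mathbb H}$ with constant differential norm $\kappa c$, and apply Theorem~\ref{1-dim thm} to conclude $\kappa c = 2$ and $\psi^\kappa = r'\psi_0$. Your version merely makes explicit the norm-scaling bookkeeping that the paper leaves implicit, which is a harmless (indeed helpful) elaboration.
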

\begin{proof}
    Since the Bergman metric on $\mathbb H^n$ is the K\"ahler--Einstein metric with Ricci curvature $-1$, we have $\widetilde \omega=\frac{1}{\kappa}\omega_{\mathbb H^n}$. This implies that $\kappa \log\psi$ is a K\"ahler potential of the Bergman metric satisfying the condition in Theorem~\ref{1-dim thm}. 
    Therefore, $c\kappa = 2$ and $\psi^\kappa = r\psi_0$ for some $r>0$. This completes the proof.
\end{proof}

\section{Geometry of $\mathbb H^n$}\label{preliminaries}
In this section, we present some properties of K\"ahler potentials of the Bergman metric of $\mathbb H^n$. We will also show that if the Bergman metric of $\mathbb H^n$ admits a potential with constant differential norm $c$, then $c=n+1$ (Section~\ref{unique constant}). Moreover, we derive an equation induced from constant differential norm condition and we describe the behavior of the holomorphic vector field on $\mathbb H^n$ which generates affine automorphisms.
The information presented in this section will be used crucially to prove Theorem~\ref{main1}.
\subsection{Canonical K\"ahler potentials of the Bergman metrics.}
Let $\mathbb B^n := \{ z\in \mathbb C^n : |z|<1\}$ be the unit ball in $\mathbb C^n$
and 
$$
\mathbb H^n := \{w\in \mathbb C^n : \textup{Re } w_n > |w_1|^2 +\dots + |w_{n-1}|^2\}
$$
be the Siegel domain of the second kind 
which is biholomorphic to $\mathbb B^n$ by the Cayley transform $\mathcal{C} \colon \mathbb B^n\to\mathbb H^n$ given by
$$
\mathcal{C}(z) = \left( \frac{z_1}{1-z_n}, \ldots, \frac{z_{n-1}}{1-z_n}, \frac{1+z_n}{1-z_n}\right).
$$
The inverse $\mathcal F:= \mathcal C^{-1}\colon \mathbb H^n\to\mathbb B^n$ of $\mathcal C$ is given by $$\mathcal{F}(w) = \left( \frac{2w_1}{ w_n + 1 },\,\ldots, \,  \frac{2w_{n-1}}{ w_n + 1 },\, \frac{w_n-1}{ w_n + 1 }\right).
$$
Let $\omega_{\mathbb H^n}$ be the K\"ahler--Einstein metric with the Ricci curvature $-1$ on $\mathbb H^n$, i.e. for $w':=(w_1,\ldots, w_{n-1})$
\begin{equation}\nonumber
\begin{aligned}
    \omega_{\mathbb H^n} &= 
    -(n+1)\sqrt{-1}\partial\bar\partial \log(\text{Re}\,w_n - |w'|^2).
    \end{aligned}
\end{equation}
Let $ (g_{i\bar j})$ denote the matrix form of $\omega_{\mathbb H^n}$ with respect to the standard Euclidean coordinates.
We remark that $\omega_{\mathbb H^n}$ is the Bergman metric of $\mathbb H^n$.
Let $(g^{k\overline j})$ denote the inverse of $(g_{i\overline j})$ satisfying $\sum_{j}g_{i\bar j}g^{k\bar j}=\delta_{ik}$. Explicitly,  
\begin{equation}\nonumber
    (g^{k\overline j}) = \frac{\text{Re}\,w_n - |w'|^2}{n+1}
    \left(\begin{array}{ccc|c}
    1& &0&2 w_1\\
    &\ddots&&\vdots\\
    0&&1&2 w_{n-1}\\\hline
    2\overline w_1&\cdots&2\overline w_{n-1}& 4\text{Re}\, w_n
    \end{array}\right).
\end{equation}
The canonical potential of the $\omega_{\mathbb H^n}$ is  $\log\psi_0$ where
$$
\psi_0(w):=(\textup{Re } w_n - |w'|^2)^{-n-1} 
$$
for $w=(w_1,\ldots, w_n)=(w',w_n)$.
Remark that 
\begin{equation}\nonumber
\begin{aligned}
\|\partial\log\psi_0\|_{\omega_{\mathbb H^n}}^2
&= \frac{(n+1)^2}{(\text{Re}\,w_n-|w'|^2)^2}\|\partial(\text{Re}\,w_n-|w'|^2)\|^2_{\omega_{\mathbb H^n}}\\
&=\frac{n+1}{(\text{Re}\,w_n-|w'|^2)}
X   \left(\begin{array}{ccc|c}
    1& &0&2 w_1\\
    &\ddots&&\vdots\\
    0&&1&2 w_{n-1}\\\hline
    2\overline w_1&\cdots&2\overline w_{n-1}& 4\text{Re}\, w_n
    \end{array}\right)
    \overline X^t\\
    &= \frac{n+1}{(\text{Re}\,w_n-|w'|^2)}\left(0,\cdots,0,2(\text{Re}\,w_n - |w'|^2)\right)\overline X^t\\
    &=n+1
\end{aligned}
\end{equation}
where $X=\left(-w_1,\cdots, -w_{n-1},1/2\right)$ which represents $\partial(\textup{Re}\,w_n - |w'|^2)$ in Euclidean coordinates.

The pullback function of $\psi_0$ to $\mathbb B^n$ by $\mathcal C$ is given by 
\begin{equation}\nonumber
    \varphi_0(z):=\psi_0\circ\mathcal C(z)= \frac{|1-z_n|^{2(n+1)}}{\left(1-|z|^2\right)^{n+1}}
\end{equation}
for $z=(z_1,\ldots, z_n)$.
Let $\omega_{\mathbb B^n}$ be the K\"ahler--Einstein (Bergman) metric with the Ricci curvature $-1$ on $\mathbb B^n$, i.e.
\begin{equation}\nonumber
\begin{aligned}
    \omega_{\mathbb B^n} &= \sqrt{-1}\partial\bar\partial \log K_{\mathbb B^n}(z,z) = -(n+1)\sqrt{-1}\partial\bar\partial \log(1-|z|^2)\\
    \end{aligned}
\end{equation}
where $K_{\mathbb B^n}$ denotes the Bergman kernel of $\mathbb B^n$.
Therefore $\log\varphi_0$ is a K\"ahler potential of $\omega_{\mathbb B^n}$.
Since $\mathcal C$ is an isometry with respect to $\omega_{\mathbb H^n}$, $\omega_{\mathbb B^n}$, 
we have 
\begin{equation}\nonumber
    \|\partial\log\varphi_0\|^2_{\omega_{\mathbb B^n}}=n+1.
\end{equation}

\subsection{Uniqueness of the constant for $\mathbb H^n$ with $n\geq 2$}\label{unique constant}
In this subsection, we will use the summation convention for duplicated indices and denote 
\begin{equation}\nonumber
    \partial_k:=\frac{\partial}{\partial w_k}, \quad
    f_j:=\frac{\partial f}{\partial w_j},\quad
    f_{\bar j}:=\frac{\partial f}{\partial \overline w_j},\quad
    f^{\bar k}:=f_jg^{j\bar k},\quad 
    f^k:=f_{\bar j}g^{k\bar j}.
\end{equation}
We will also use the covariant derivative notation using semicolon on vector coefficients $f_{j;k}$.

Let $\psi\colon \mathbb H^n\to \mathbb R$ be a function such that $\log\psi$ is a K\"ahler potential of $\omega_{\mathbb H^n}$ and satisfies $\|\partial\log\psi\|^2_{\omega_{\mathbb H^n}}\equiv c$ for some constant $c\in\mathbb R$.
Let $\Psi:=\log\psi$ and define a vector field $V$ on $\mathbb H^n$ by $$V:=\text{grad}\, \Psi = \Psi^k\partial_k.$$
Since we have 
\begin{equation}\nonumber
0=\partial_j\|\partial\Psi\|^2_{\omega_{\mathbb H^n}} = \partial_j(\Psi_i\Psi^i) = \Psi_{i;j}\Psi^i + \Psi_i \Psi^i_{\; ; j}
= \Psi_{i;j}\Psi^i + \Psi_j,
\end{equation}
we obtain 
\begin{equation}\label{tensor1}
    \Psi_{i;j}\Psi^i =- \Psi_j.
\end{equation}
Let $\text{span}\{V, \overline V\}$ denote the distribution generated by $V$ and $\overline V$.
\begin{lemma}
    $\textup{span}\{V, \overline V\}$ is integrable.
\end{lemma}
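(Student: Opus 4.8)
The plan is to invoke the Frobenius theorem: since $[V,V]=[\overline V,\overline V]=0$ automatically, integrability of $\operatorname{span}\{V,\overline V\}$ reduces to checking that the single bracket $[V,\overline V]$ again lies in $\operatorname{span}\{V,\overline V\}$. Because the Levi--Civita connection $\nabla$ of the Kähler metric $\omega_{\mathbb H^n}$ is torsion-free, I would replace the bracket by covariant derivatives, $[V,\overline V]=\nabla_V\overline V-\nabla_{\overline V}V$, and compute the two terms separately in the coordinate frame $\{\partial_k,\partial_{\bar k}\}$.

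The computation hinges on two standard Kähler features. First, the only nonvanishing Christoffel symbols of $\omega_{\mathbb H^n}$ carry purely holomorphic or purely anti-holomorphic indices, so the mixed covariant derivatives vanish: $\nabla_{\partial_a}\partial_{\bar c}=0$ and $\nabla_{\partial_{\bar c}}\partial_a=0$. Writing $V=\Psi^a\partial_a$ and $\overline V=\Psi^{\bar c}\partial_{\bar c}$ and using that $g$ is parallel, this collapses the two terms to the Hessian contractions $\nabla_V\overline V=\Psi^a\Psi_{d;a}\,g^{d\bar c}\partial_{\bar c}$ and $\nabla_{\overline V}V=\Psi^{\bar c}\Psi_{\bar b;\bar c}\,g^{a\bar b}\partial_a$. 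Second, the covariant Hessian $\Psi_{i;j}$ is symmetric in its two holomorphic indices; combining this symmetry with \eqref{tensor1}, namely $\Psi_{i;j}\Psi^i=-\Psi_j$, yields $\Psi^a\Psi_{d;a}=-\Psi_d$, and its conjugate gives $\Psi^{\bar c}\Psi_{\bar b;\bar c}=-\Psi_{\bar b}$.

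Substituting these contractions I get $\nabla_V\overline V=-\Psi_d\,g^{d\bar c}\partial_{\bar c}=-\overline V$ and $\nabla_{\overline V}V=-\Psi_{\bar b}\,g^{a\bar b}\partial_a=-V$, whence $[V,\overline V]=\nabla_V\overline V-\nabla_{\overline V}V=V-\overline V$. This is manifestly a section of $\operatorname{span}\{V,\overline V\}$, so the distribution is integrable.

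I expect the only real care to be the index bookkeeping: correctly identifying which Christoffel symbols drop out so that the bracket reduces exactly to the two Hessian contractions, and then matching the index pattern of $\Psi^a\Psi_{d;a}$ to the form $\Psi_{i;j}\Psi^i$ in \eqref{tensor1} via the symmetry $\Psi_{i;j}=\Psi_{j;i}$. Once these reductions are secured, the identity \eqref{tensor1} supplies the entire computation, and no new analytic input beyond the constant-norm hypothesis (which already produced \eqref{tensor1}) is needed.
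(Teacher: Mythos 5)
Your proof is correct and is essentially the paper's own argument: the paper likewise computes $[V,\overline V]=\Psi^k\Psi^{\bar j}_{\;;k}\partial_{\bar j}-\Psi^{\bar j}\Psi^k_{\;;\bar j}\partial_k$ (which is exactly the coordinate form of $\nabla_V\overline V-\nabla_{\overline V}V$, valid because the mixed Christoffel symbols of a K\"ahler metric vanish), and then uses the Hessian symmetry $\Psi_{\ell;k}=\Psi_{k;\ell}$ together with \eqref{tensor1} to conclude $[V,\overline V]=V-\overline V\in\textup{span}\{V,\overline V\}$. The only difference is presentational: you make the torsion-free reduction to covariant derivatives explicit, whereas the paper writes the same two contractions directly.
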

\begin{proof}
By a relation  $\Psi_{\ell ; k} = \Psi_{k ; \ell}$,
 we obtain
\begin{equation}\nonumber
\begin{aligned}
    [V,\overline V]
    &=[\Psi^k\partial_k, \Psi^{\bar j}\partial_{\bar j}] 
    = \Psi^k \Psi^{\bar j}_{\; ; k}\partial_{\bar j} - \Psi^{\bar j} \Psi^k_{\; ; \bar j}\partial_k
    =\Psi^k \Psi_{\ell ; k}g^{\ell \bar j}\partial_{\bar j} - \Psi^{\bar j} \Psi_{\bar\ell ; \bar j}g^{k\bar \ell}\partial_k\\
    &= \Psi^k \Psi_{k ; \ell}g^{\ell \bar j}\partial_{\bar j} - \Psi^{\bar j} \Psi_{\bar j ; \bar \ell}g^{k\bar \ell}\partial_k
    =-\Psi^{\bar j} \partial_{\bar j} + \Psi^k\partial_k
    =V-\overline V,
\end{aligned}
\end{equation}
    and it completes the proof.
\end{proof}

\begin{lemma}
    The integrable submanifold of $\textup{span}\{V,\overline V\}$ in $\mathbb H^n$ is totally geodesic.
\end{lemma}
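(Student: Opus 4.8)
The plan is to verify that the integral submanifold $S$ of $\textup{span}\{V,\overline V\}$ is totally geodesic directly from the definition: for all vector fields $X,Y$ tangent to $S$, the covariant derivative $\nabla_X Y$ must remain tangent to $S$, where $\nabla$ is the Levi-Civita connection of $\omega_{\mathbb H^n}$. Since the distribution is $J$-invariant and its complexification is spanned over $\mathbb C$ by $V$ and $\overline V$, and since $\nabla$ is a real connection, it suffices to show that each of the four expressions $\nabla_V V$, $\nabla_V\overline V$, $\nabla_{\overline V}V$, $\nabla_{\overline V}\overline V$ again lies in $\textup{span}\{V,\overline V\}$. I would in fact prove the sharper statement that $\nabla_V V = V$ and $\nabla_V\overline V = -\overline V$, the remaining two identities following by conjugation.

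First I would record the two inputs that drive the computation. Because $\omega_{\mathbb H^n}$ is K\"ahler, in the holomorphic frame $\{\partial_k\}$ the only nonvanishing Christoffel symbols are $\Gamma^m_{jk}$ and their conjugates, so that $\nabla_{\partial_j}\partial_{\bar k}=0$. Moreover, because $\Psi=\log\psi$ is a potential of $\omega_{\mathbb H^n}$, the mixed second covariant derivative of $\Psi$ is the metric itself, namely $\Psi_{\bar i;j}=\partial_j\partial_{\bar i}\Psi=g_{j\bar i}$ (no Christoffel correction appears in the mixed direction). Combining these gives the key collapse
\[
\Psi^m_{\;;j}=g^{m\bar i}\Psi_{\bar i;j}=g^{m\bar i}g_{j\bar i}=\delta^m_j,
\]
and hence, using $\nabla_{\partial_j}\partial_{\bar k}=0$ to discard the anti-holomorphic Christoffel terms, $(\nabla_V V)^m=\Psi^j\Psi^m_{\;;j}=\Psi^m$, that is $\nabla_V V = V$.

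For the mixed term I would again use $\nabla_{\partial_j}\partial_{\bar k}=0$ to obtain $(\nabla_V\overline V)^{\bar k}=\Psi^j\Psi^{\bar k}_{\;;j}=g^{m\bar k}\,\Psi^j\Psi_{m;j}$, and then invoke the second ingredient: equation~\eqref{tensor1}, $\Psi_{i;j}\Psi^i=-\Psi_j$, which is the infinitesimal form of the constant differential norm hypothesis. Using the symmetry $\Psi_{m;j}=\Psi_{j;m}$ to contract over the correct index, \eqref{tensor1} yields $\Psi^j\Psi_{m;j}=-\Psi_m$, whence $(\nabla_V\overline V)^{\bar k}=-g^{m\bar k}\Psi_m=-\Psi^{\bar k}$, i.e. $\nabla_V\overline V=-\overline V$. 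By conjugation $\nabla_{\overline V}\overline V=\overline V$ and $\nabla_{\overline V}V=-V$. All four covariant derivatives therefore lie in $\textup{span}\{V,\overline V\}$, so the second fundamental form of $S$ vanishes and $S$ is totally geodesic.

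The computation is short once the right identities are in hand, so I do not expect a genuine obstacle beyond keeping the holomorphic and anti-holomorphic index positions straight and contracting \eqref{tensor1} over the index matched by the symmetry of $\Psi_{m;j}$. The one conceptual point worth emphasizing — the step that makes everything trivialize — is the identity $\Psi^m_{\;;j}=\delta^m_j$, which is a direct consequence of $\Psi$ being a K\"ahler potential; without it the tangential part of $\nabla_V V$ would be an uncontrolled vector rather than $V$ itself, and the argument would not close.
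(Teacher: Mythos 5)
Your proof is correct and is essentially the paper's own argument: both hinge on the potential identity $\Psi^m_{\;;j}=g^{m\bar i}\Psi_{\bar i;j}=g^{m\bar i}g_{j\bar i}=\delta^m_j$ to get $\nabla_V V=V$, and on \eqref{tensor1} (contracted via the symmetry of the pure second covariant derivative) to get the mixed derivative equal to minus the field, the only cosmetic difference being that the paper computes $\nabla_{\overline V}V=-V$ directly while you compute its conjugate $\nabla_V\overline V=-\overline V$ and conjugate back.
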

\begin{proof}
Let $\nabla$ be the K\"ahler connection of $\omega_{\mathbb H^n}$.
Note that $\Psi^j_{\; ; k} = \Psi_{\bar \ell;k}g^{j\bar \ell}= g_{k \bar\ell}g^{j\bar \ell}=\delta_{jk}$. 
By~\eqref{tensor1} 
    \begin{equation}\nonumber
    \nabla_V V = \nabla_{\Psi^k\partial_k} \Psi^j\partial_j
    = \Psi^k \Psi^j_{\; ;k}\partial_j
    = \Psi^j \partial_j 
    = V
    \end{equation}
    and 
    \begin{equation}\nonumber
    \nabla_{\overline V} V = \nabla_{\Psi^{\bar k}\partial_{\bar k}} \Psi^j\partial_j
    = \Psi^{\bar k}
    \Psi^j_{\; ;\bar k}\partial_j
    = -\Psi^j \partial_j 
    = -V,
    \end{equation}
    and it implies that the integrable submanifolds are totally geodesic.
\end{proof}

Since the gradient vector field of any K\"ahler potential for a complete K\"ahler metric is complete, $V$ is complete.
Let $S$ be the maximal integrable submanifold of $\textup{span}\{V,\overline V\}$ in $\mathbb H^n$ passing through $(0,\ldots, 0,1)$.
By composing an isotropic automorphism of $\mathbb H^n$ at $(0,\ldots, 0,1)$ we may assume that $S=\{(0,\ldots, 0,w_n)\in \mathbb H^n\}\cong \mathbb H$. 
Hence, $\omega_S:= \omega_{\mathbb H^n}|_S =\frac{n+1}{2}\omega_{\mathbb H}$, i.e. $$(\partial\bar\partial \log\psi)|_S = \partial\bar\partial (\log\psi|_S)=\frac{n+1}{2}\omega_{\mathbb H}.
$$
This implies that $\log\psi|_S$ is a potential of the metric $\omega_S$.

On the other hand, since
\begin{equation}\nonumber
    \|\partial\log\psi|_S\|^2_{\omega_S}
    = \frac{\left| V\log\psi\right|^2}{\;\;\;\|V\|_{\omega_{\mathbb H}}^2}
    = \|\partial\log\psi\|_{\mathbb H^n}^2,
\end{equation}
we have 
\begin{equation}\nonumber
    \|\partial\log\psi|_S\|^2_{\omega_S}=c.
\end{equation}
Since $\omega_s$ has Ricci curvature $-\frac{2}{n+1}$, by Corollary~\ref{1-dim cor} we obtain $c=n+1$.

In summary, we obtain 
\begin{theorem}\label{uniqueness of constant}
Let $\psi\colon\mathbb H^n\to \mathbb R$ be a function such that $\log\psi$ is a K\"ahler potential of $\omega_{\mathbb H^n}$.
Suppose 
$$
\|\partial\log\psi\|_{\omega_{\mathbb H^n}}^2 \equiv c
$$
for some constant $c\in \mathbb R$.
Then $c=n+1$.
\end{theorem}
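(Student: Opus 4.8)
The plan is to reduce the $n$-dimensional assertion to the one-dimensional result in Corollary~\ref{1-dim cor} by exhibiting a totally geodesic complex curve inside $\mathbb H^n$ on which the restricted potential still has constant differential norm. Writing $\Psi:=\log\psi$, the natural object to track is the gradient vector field $V:=\operatorname{grad}\Psi=\Psi^k\partial_k$ together with the real distribution $\operatorname{span}\{V,\overline V\}$. I would use the two lemmas established above: this distribution is integrable (with $[V,\overline V]=V-\overline V$) and its integral leaves are totally geodesic (with $\nabla_V V=V$ and $\nabla_{\overline V}V=-V$). Both rest on the identity $\Psi_{i;j}\Psi^i=-\Psi_j$ of \eqref{tensor1}, which is forced by differentiating the constant-norm hypothesis and using $\Psi^i_{\;;j}=\delta_{ij}$, the latter being a consequence of $\log\psi$ being a potential, i.e.\ $\Psi_{i\bar j}=g_{i\bar j}$.

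With these in hand, I would let $S$ be the maximal leaf through the base point $(0,\dots,0,1)$; it is a complete totally geodesic complex curve because $V$ is complete (the gradient of a potential of a complete K\"ahler metric). Since the isotropy subgroup of $\mathbb H^n$ at $(0,\dots,0,1)$ acts by isometries and can be used to carry $S$ to the standard slice $\{(0,\dots,0,w_n)\}\cong\mathbb H$, I may assume $S$ is this slice without altering the constant $c$. On $S$ the induced metric is $\omega_S=\tfrac{n+1}{2}\,\omega_{\mathbb H}$, hence K\"ahler--Einstein with Ricci curvature $-\tfrac{2}{n+1}$, and $\log\psi|_S$ is a potential for it.

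The decisive compatibility is that the intrinsic differential norm on $S$ equals the ambient one. This is where total geodesy pays off: because $V=\operatorname{grad}\Psi$ is tangent to $S$, the one-variable computation gives $\|\partial(\log\psi|_S)\|^2_{\omega_S}=|V\log\psi|^2/\|V\|^2_{\omega_{\mathbb H^n}}=\|\partial\log\psi\|^2_{\omega_{\mathbb H^n}}=c$. Applying Corollary~\ref{1-dim cor} with $\kappa=\tfrac{2}{n+1}$ then yields $c=2/\kappa=n+1$, as desired.

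The step I expect to be the crux is the passage to $S$ and the identification of the two differential norms, which depends entirely on the leaf being genuinely totally geodesic; if it were not, the restriction of $\omega_{\mathbb H^n}$ to $S$ would not be the intrinsic K\"ahler--Einstein metric needed to invoke the one-dimensional theorem, and the reduction would collapse. By comparison, the normalization by an isotropy automorphism and the final arithmetic from Corollary~\ref{1-dim cor} are routine once the geometry of $S$ is secured.
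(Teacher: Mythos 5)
Your proposal is correct and follows essentially the same route as the paper: the gradient field $V=\operatorname{grad}\Psi$, the identity \eqref{tensor1}, the integrability and total-geodesy lemmas for $\operatorname{span}\{V,\overline V\}$, normalization of the leaf $S$ to the slice $\{(0,\dots,0,w_n)\}$ by an isotropy automorphism, the equality of intrinsic and ambient differential norms, and the application of Corollary~\ref{1-dim cor} with $\kappa=\tfrac{2}{n+1}$. No gaps; your identification of the totally geodesic property as the crux matches the paper's own emphasis.
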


\subsection{Complete affine holomorphic vector fields.}

Let $\mathcal T_s$, $\mathcal T^{2,k}_s$, $\mathcal T^{3,k}_s$ and $\mathcal D_s$ with $k=1,\ldots, n-1$ be one-parameter families of affine automorphisms of $\mathbb H^n$ given by 
\begin{equation}\label{one parameter}
    \begin{aligned}
    \mathcal T_s(w_1, \ldots, w_n) &= (w_1, \ldots, w_{n-1}, w_n + 2\sqrt{-1}s),\\
    \mathcal T^{2,k}_s(w_1,\ldots, w_n) &= (w_1,\ldots,w_k+s, \ldots, w_{n-1}, w_n + 2s w_k + s^2), \\
    \mathcal T^{3,k}_s(w_1,\ldots, w_n) &= (w_1,\ldots, w_k + \sqrt{-1}s, \ldots, w_{n-1}, w_n - 2\sqrt{-1}s w_k + s^2),\\
    \mathcal D_s(w_1,\ldots, w_n) &= (e^s w_1,\ldots, e^s w_{n-1}, e^{2s} w_n)
    \end{aligned}
\end{equation}
for $s\in \mathbb R$. Note that 
$\mathcal C^{-1}\circ \mathcal T_s\circ \mathcal C $, 
$\mathcal C^{-1}\circ \mathcal T^{2,k}_s\circ \mathcal C $,
$\mathcal C^{-1}\circ \mathcal T_s^{3,k}\circ \mathcal C $ are parabolic automorphisms of $\mathbb B^n$ and 
$\mathcal C^{-1}\circ \mathcal D_s\circ \mathcal C $ are hyperbolic automorphisms of $\mathbb B^n$. 
Let us define automorphisms $\mathcal S^{1,k}$ of $\mathbb H^n$ by
\begin{equation}\label{permutation}
\mathcal S^{1,k}(w_1,\ldots, w_n) := (w_k,w_2,\ldots, w_{k-1}, w_1, w_{k+1},\ldots,w_n)
\end{equation}
for each $k=1,\ldots, n-1$.
These are isotropic automorphisms of $\mathbb H^n$ at $(0,\ldots, 0,1)$.

The complete holomorphic vector fields generated by $\mathcal T_s^1$, $\mathcal T_s^2$, $\mathcal T_s^3$, $\mathcal D_s$ are 
\begin{equation}\nonumber
    \begin{aligned}
    T &= 2\sqrt{-1}\frac{\partial}{\partial w_n},\\
    T^{2,k} &= 2 w_k\frac{\partial}{\partial w_n} + \frac{\partial}{\partial w_k},\quad 1\leq k \leq n-1,\\
    T^{3,k} &= -2\sqrt{-1} w_k \frac{\partial}{\partial w_n} + \sqrt{-1}\frac{\partial}{\partial w_k},\quad 1\leq k \leq n-1,\\
    D&= 2w_n\frac{\partial}{\partial w_n} + \sum_{k=1}^{n-1} w_k\frac{\partial}{\partial w_k},
    \end{aligned}
\end{equation}
respectively.
Note that $(\mathcal T_s)_* D = D-2s T$.

Let $U(n-1)\times \{1\}$ be the subgroup contained in the isotropy subgroup of $\mathbb H^n$ at $(0,\ldots,0,1)$ acting on $\mathbb H^n$ by $(w',w_n)\mapsto (Uw', w_n)$ for any $U\in U(n)$. 
The complete holomorphic vector fields generated by $U(n-1)\times \{1\}$ are of the form 
\begin{equation}\label{unitary}
\sum_{i,j=1}^{n-1} u_{ij}w_j\frac{\partial}{\partial w_i}
\end{equation}
where $u=(u_{ij})\in \{(u_{ij})_{i,j=1}^{n-1} : u_{ij}+\overline u_{ji}=0\}$, the Lie algebra of $U(n-1)$.
We will call a vector field of the form \eqref{unitary} a {\it unitary vector field}.
There are $(n-1)^2$ number of complete holomorphic vector fields corresponding to $U(n-1)\times \{1\}$:
\begin{equation}\nonumber
    U^{ij} := w_j\frac{\partial}{\partial w_i}- w_i\frac{\partial}{\partial w_j}, \quad V^{ij}:= \sqrt{-1}\left( w_j\frac{\partial}{\partial w_i} +w_i\frac{\partial}{\partial w_j} \right),\quad
     1\leq i< j\leq n-1,
\end{equation}
\begin{equation}\nonumber
W^k:= \sqrt{-1}w_k\frac{\partial}{\partial w_k},\quad 1\leq k\leq n-1.
\end{equation}

By a straightforward calculation, the pushforwards of $T$, $T^{2,k}$, $T^{3,k}$ and $D$ by the automorphisms in \eqref{one parameter} and \eqref{permutation} are as follows:
\begin{enumerate}
    \item $\mathcal{T}_s$ action:
\begin{equation}\label{P_s}
    \begin{aligned}
    &(\mathcal T_s)_*  D =  D -2s  T, \quad 
    (\mathcal T_s)_*  T = T, \quad (\mathcal T_s)_*  T^{2,k} = T^{2,k},\quad
    (\mathcal T_s)_*  T^{3,k} = T^{3,k},\\
    &(\mathcal T_s)_* U^{ij} = U^{ij},\quad
    (\mathcal T_s)_* V^{ij} = V^{ij},  
    \quad (\mathcal T_s)_* W^k = W^k
    \end{aligned}
\end{equation}
\item $\mathcal{T}^{2,k}_s$ action:
\begin{equation}\label{P_{2,k}}
\begin{aligned}
    &(\mathcal T^{2,k}_s)_* D 
    = D -s T^{2,k},
    \quad (\mathcal T^{2,k}_s)_* T  = T, 
    \quad  (\mathcal T_s^{2,k})_* T^{2,k} = T^{2,k},\\
    &(\mathcal T_s^{2,k})_* T^{3,k} 
    = 2sT+ T^{3,k},
    \quad (\mathcal T_s^{2,k})_* T^{2,\ell} 
    =  T^{2,\ell},\quad
    (\mathcal T_s^{2,k})_* T^{3,\ell} 
    =  T^{3,\ell} \,(\ell\neq k),\\
    &(\mathcal T_s^{2,k})_* U^{ij} = U^{ij}\, (k\neq i,j), 
    (\mathcal T^{2,i}_s)_*U^{ij} = U^{ij} + sT^{2,j},\quad 
    (\mathcal T^{2,j}_s)_*U^{ij} = U^{ij} -sT^{2,i}, \\
    &(\mathcal T_s^{2,k})_* V^{ij} = V^{ij}\, (k\neq i,j),\quad
    (\mathcal T^{2,i}_s)_*V^{ij} = V^{ij} - sT^{3,j},\quad 
(\mathcal T^{2,j}_s)_*V^{ij} = V^{ij} -sT^{3,i}, \\
& (\mathcal T^{2,k}_s) W^k =  W^k -sT^{3,k} - s^2 T,\quad (\mathcal T^{2,k}_s) W^\ell = W^\ell \,( k\neq\ell),
    \end{aligned}
\end{equation}
\item $\mathcal{T}^{3,k}_s$ action:
\begin{equation}\label{P_{3,k}}
\begin{aligned}
    &(\mathcal T^{3,k}_s)_* D 
    = D -s T^{3,k},
     \quad (\mathcal T^{3,k}_s)_* T  
    = T, 
     \quad (\mathcal T_s^{3,k})_* T^{2,k} = -2sT+T^{2,k},\\
    &(\mathcal T_s^{3,k})_* T^{3,k} 
    =T^{3,k},
    \quad (\mathcal T_s^{3,k})_* T^{2,\ell} 
    = T^{2,\ell},\quad 
    (\mathcal T_s^{3,k})_* T^{3,\ell} 
    =  T^{3,\ell}\,(\ell\neq k),\\
    &(\mathcal T_s^{3,k})_* U^{ij} = U^{ij} \,(k\neq i,j)\quad (\mathcal T^{3,i}_s)_*U^{ij} = U^{ij} + sT^{3,j},\quad 
    (\mathcal T^{3,j}_s)_*U^{ij} = U^{ij} -sT^{3,i},\\
    &(\mathcal T_s^{3,k})_* V^{ij} = V^{ij} \,(k\neq i,j),\quad
    (\mathcal T^{3,i}_s)_*V^{ij} = V^{ij} + sT^{2,j},\quad 
    (\mathcal T^{3,j}_s)_*V^{ij} = V^{ij} + sT^{2,i},\\
    &(\mathcal T^{3,k}_s) W^k =  W^k + sT^{2,k} - s^2 T,\quad 
    (\mathcal T^{3,k}_s) W^\ell = W^\ell \,(k\neq\ell),
    \end{aligned}
\end{equation} 
\item $\mathcal S^{1,k}$ action:
\begin{equation}\label{S}
\begin{aligned}
    &(\mathcal S^{1,k})_* T^{2,k} = T^{2,1},\quad
    (\mathcal S^{1,k})_* T^{3,k} = T^{3,1},\quad
    (\mathcal S^{1,k})_* T = T,\\
    &(\mathcal S^{1,k})_* T^{2,\ell}=T^{2,\ell},\quad
    (\mathcal S^{1,k})_* T^{3,\ell}=T^{3,\ell}\,(\ell\neq k),\\
    & (\mathcal S^{1,i})_* U^{ij} = U^{1j},\quad 
    (\mathcal S^{1j})_* U^{1j} = -U^{1j},\\
    & (\mathcal S^{1,i})_* V^{ij} = V^{1j},\quad 
    (\mathcal S^{1,j})_* V^{1j} = V^{1j},\quad\\
    &(\mathcal S^{1,k})_*W^k = W^1,\quad
    (\mathcal S^{1,k})_* W^\ell = W^\ell\,(k\neq \ell).
\end{aligned}
\end{equation}
\end{enumerate}

\subsection{Condition to have constant 
 differential norm}
Let $\psi\colon\mathbb H^n\to \mathbb R$ be a function such that $$
\sqrt{-1}\partial\bar\partial\log\psi = \omega_{\mathbb H^n} \quad\text{ and }\quad \|\partial\log \psi\|_{\omega_{\mathbb H^n}}^2 \equiv c
$$
for some constant $c$.
Let $f$ be a holomorphic function such that $\log \psi = \log\psi_0 + f+\overline f$.
By Theorem~\ref{uniqueness of constant}, we obtain $c= n+1$.
Since $\|\partial\log \psi\|_{\omega_{\mathbb H^n}}^2=\|\partial\log \psi_0\|_{\omega_{\mathbb H^n}}^2  = n+1$, we have
\begin{equation}\nonumber
    \begin{aligned}
    n+1&= \| \partial\log\psi\|^2_{\omega_{\mathbb H^n}} = \sum_{k,j=1}^n \frac{\partial \log\psi}{\partial w_k}\frac{\partial \log\psi}{\partial \overline w_j} g^{k\overline j}\\
    & = \|\partial\log \psi_0\|_{\omega_{\mathbb H^n}}^2
    + \sum_{k,j=1}^n \frac{\partial f}{\partial w_k} \frac{\partial\log\psi_0}{\partial \overline w_j} g^{k\overline j}
    + \sum_{k,j=1}^n \frac{\partial \overline f}{\partial \overline w_j} \frac{\partial\log\psi_0}{\partial w_k} g^{k\overline j}  
    + \sum_{k,j=1}^n \frac{\partial f}{\partial w_k} \frac{\partial \overline f}{\partial \overline w_j} g^{k\overline j},
    \end{aligned}
\end{equation}
and as a result we obtain
\begin{equation}\label{eq_constant norm}
\begin{aligned}
    0&=2\textup{Re} \sum_{k,j=1}^n \frac{\partial \overline f}{\partial \overline w_j} \frac{\partial\log\psi_0}{\partial w_k} g^{k\overline j}  
    + \sum_{k,j=1}^n \frac{\partial f}{\partial w_k} \frac{\partial \overline f}{\partial \overline w_j} g^{k\overline j}\\
    &= -4(\text{Re}\,w_n -|w'|^2)\text{Re}\,\frac{\partial f}{\partial w_n} + \|df\|_{\omega_{\mathbb H^n}}^2\\
    &=-4(n+1)\text{Re}\,\frac{\partial f}{\partial w_n} + \sum_{j=1}^{n-1}\left|\frac{\partial f}{\partial w_j}\right|^2 + \text{Re}\left( \frac{\partial \overline f}{\partial\overline w_n}\sum_{j=1}^{n-1}\frac{\partial f}{\partial w_j}  w_j\right)
    +4\text{Re}\,w_n\left|\frac{\partial f}{\partial w_n}\right|^2.
    \end{aligned}
\end{equation}

It is worth to note that by \eqref{eq_constant norm} $\frac{\partial f}{\partial w_n}\colon \mathbb H^n\to \mathbb C$ is a holomorphic function of which image is contained in $\overline{\mathbb H^1}=\{w\in \mathbb C: \text{Re}\,w\geq 0\}$.
Considering the holomorphic function $\frac{\partial f}{\partial w_n}\circ \mathcal C\colon \mathbb B^n\to \mathbb C$, if there exists a point in $\mathbb H^n$ such that $\text{Re}\frac{\partial f}{\partial w_n}=0$, then $\text{Re}\frac{\partial f}{\partial w_n}$ vanishes identically.
By the second equation in \eqref{eq_constant norm} $df\equiv 0$ on $\mathbb H^n$ and hence $f$ is a constant.
\begin{lemma}\label{maximum principle}
    Let $\psi\colon\mathbb H^n\to \mathbb R$ be a K\"ahler potential of $\omega_{\mathbb H^n}$ which has constant differential norm with respect to the Bergman metric. Let $f$ be a holomorphic function on $\mathbb H^n$ such that $\log\psi =\log\psi_0 + f+\overline f$. If there exists a point $p\in \mathbb H^n$ such that $\textup{Re}\,\frac{\partial f}{\partial w_n}(p)=0$, then $f$ is a constant.
\end{lemma}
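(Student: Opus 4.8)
The plan is to exploit the maximum principle for the real part of a holomorphic function, transported to the bounded setting of $\mathbb{B}^n$ where boundary and interior behaviour are easier to control. The key structural fact, already recorded in the discussion preceding the lemma, is that equation~\eqref{eq_constant norm} forces $\RE\frac{\partial f}{\partial w_n}\geq 0$ throughout $\mathbb{H}^n$. Indeed, reading \eqref{eq_constant norm} as a quadratic identity in the quantity $\frac{\partial f}{\partial w_n}$, the terms $\sum_{j=1}^{n-1}\left|\frac{\partial f}{\partial w_j}\right|^2$ and $4\RE w_n\left|\frac{\partial f}{\partial w_n}\right|^2$ are manifestly nonnegative (recall $\RE w_n>|w'|^2\geq 0$ on $\mathbb{H}^n$), and one checks that the mixed term combines with $-4(n+1)\RE\frac{\partial f}{\partial w_n}$ so that the whole expression can only vanish if $\RE\frac{\partial f}{\partial w_n}\geq 0$; thus the image of the holomorphic function $\frac{\partial f}{\partial w_n}$ lies in the closed half-plane $\overline{\mathbb{H}^1}$.

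First I would set $h:=\frac{\partial f}{\partial w_n}\circ\mathcal{C}\colon\mathbb{B}^n\to\overline{\mathbb{H}^1}$, which is holomorphic on the connected domain $\mathbb{B}^n$. The hypothesis of the lemma, that $\RE\frac{\partial f}{\partial w_n}(p)=0$ at some interior point $p\in\mathbb{H}^n$, translates to the statement that the real part of the holomorphic function $h$ attains its minimum value $0$ at the interior point $q:=\mathcal{C}^{-1}(p)\in\mathbb{B}^n$. Since $\RE h$ is a harmonic function on $\mathbb{B}^n$ that is bounded below by $0$ and achieves this minimum in the interior, the minimum principle for harmonic functions forces $\RE h\equiv 0$ on all of $\mathbb{B}^n$, hence $\RE\frac{\partial f}{\partial w_n}\equiv 0$ on $\mathbb{H}^n$.

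Next I would feed this back into the defining identity. With $\RE\frac{\partial f}{\partial w_n}\equiv 0$ identically, the third displayed line of \eqref{eq_constant norm} reduces to
\begin{equation}\nonumber
0 = \sum_{j=1}^{n-1}\left|\frac{\partial f}{\partial w_j}\right|^2 + \RE\!\left(\frac{\partial\overline f}{\partial\overline w_n}\sum_{j=1}^{n-1}\frac{\partial f}{\partial w_j}\,w_j\right) + 4\RE w_n\left|\frac{\partial f}{\partial w_n}\right|^2.
\end{equation}
Here a second harmonic/holomorphic argument applies to $\frac{\partial f}{\partial w_n}$ itself: a holomorphic function with identically vanishing real part is locally constant, so $\frac{\partial f}{\partial w_n}\equiv c_0$ is a purely imaginary constant. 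The term $4\RE w_n|c_0|^2$ then grows without bound as $\RE w_n\to\infty$ while the remaining two terms are controlled by the other first derivatives; comparing the behaviour at infinity (or equivalently differentiating the identity and matching monomial coefficients, exactly as in the one-dimensional Lemmas~\ref{1-dim hyperbolic} and~\ref{1-dim parabolic}) forces $c_0=0$ and simultaneously $\frac{\partial f}{\partial w_j}\equiv 0$ for each $j=1,\dots,n-1$. Hence $df\equiv 0$ and $f$ is a constant.

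The main obstacle I anticipate is purely bookkeeping: extracting the sign $\RE\frac{\partial f}{\partial w_n}\geq 0$ cleanly from \eqref{eq_constant norm} requires rewriting that quadratic expression as a genuine sum of nonnegative terms (completing the square in $\frac{\partial f}{\partial w_n}$ against the other first derivatives), and then verifying that the residual cross-term cannot spoil nonnegativity. Once the half-plane constraint is in hand, the rest is a standard application of the minimum principle for harmonic functions followed by the open mapping theorem, so the conceptual content is entirely concentrated in that first algebraic reduction.
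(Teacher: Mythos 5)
Your proposal is correct and follows essentially the paper's own route: the half-plane constraint on $\frac{\partial f}{\partial w_n}$ extracted from \eqref{eq_constant norm}, the minimum principle for its harmonic real part, and then feeding $\RE\frac{\partial f}{\partial w_n}\equiv 0$ back into the identity to kill $df$. The one thing you missed is that the second displayed line of \eqref{eq_constant norm}, namely $0=-4\left(\RE w_n-|w'|^2\right)\RE\frac{\partial f}{\partial w_n}+\|df\|^2_{\omega_{\mathbb H^n}}$, already does all the algebraic work: positive definiteness of the metric gives $\RE\frac{\partial f}{\partial w_n}\geq 0$ at once (so your anticipated ``main obstacle'' of completing squares in the third line is a non-issue), and once $\RE\frac{\partial f}{\partial w_n}\equiv 0$ the same line yields $\|df\|^2_{\omega_{\mathbb H^n}}\equiv 0$ directly, with no need for your growth-at-infinity or coefficient-matching endgame.
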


\subsection{Hyperbolic vector fields}\label{hyperbolic}
Since we have
\begin{equation}\nonumber
    \begin{aligned}
     D \log \psi_0 
    &=-(n+1)\left( 2w_n\frac{\partial}{\partial w_n} + \sum_{k=1}^{n-1} w_k\frac{\partial}{\partial w_k}\right) \log\left(\textup{Re } w_n - |w'|^2 \right)\\
    &=-\frac{n+1}{\text{Re}\,w_n-|w'|^2}\left( 2w_n\frac{\partial}{\partial w_n} + \sum_{k=1}^{n-1} w_k\frac{\partial}{\partial w_k}\right) \left(\textup{Re } w_n - |w'|^2 \right)\\
    &=-\frac{n+1}{\text{Re}\,w_n-|w'|^2}
    \left( w_n - |w'|^2\right),
    \end{aligned}
\end{equation}
we obtain \begin{equation}\label{H phi_0}
    (\textup{Re } D) \log \psi_0 = -(n+1).
\end{equation}
Let $U=\sum_{i,j=1}^{n-1} u_{ij}w_j\frac{\partial}{\partial w_i}$ be a unitary vector field.
Since we have 
\begin{equation}\nonumber
\begin{aligned}
U\log\psi_0 &= -\frac{n+1}{\text{Re}\,w_n-|w'|^2}\left(\sum_{i,j=1}^{n-1} u_{ij}w_j\frac{\partial}{\partial w_i}\right)(\text{Re}\,w_n-|w'|^2)\\
&=\frac{n+1}{\text{Re}\,w_n-|w'|^2}\sum_{i,j=1}^{n-1} u_{ij}w_j\overline w_i\\
&=\frac{n+1}{\text{Re}\,w_n-|w'|^2}\left(
\sum_{i<j}^{n-1} u_{ij}w_j\overline w_i
+\sum_{i>j}^{n-1} u_{ij}w_j\overline w_i
\right)\\
&=\frac{2\sqrt{-1}(n+1)}{\text{Re}\,w_n-|w'|^2}\text{Im}
\left(\sum_{i<j}^{n-1} u_{ij}w_j\overline w_i\right),
\end{aligned}
\end{equation}
we obtain
\begin{equation}\label{U phi_0}
(\text{Re}\,U)\log\psi_0 
=0.
\end{equation}
\begin{lemma}\label{H_lemma}
Let $\psi\colon \mathbb H^n\to \mathbb R$ be a function with $\partial\overline\partial \log \psi = \omega_{\mathbb H^n}$ and $\|\partial\log \psi \|^2_{\mathbb H^n} \equiv n+1$. Let $U$ be a unitary vector field.
If $$(\textup{Re}\, (D + U))\log \psi \equiv c$$ for some constant $c$, then $c = \pm (n+1)$.
\end{lemma}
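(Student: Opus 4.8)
The plan is to write $\log\psi=\log\psi_0+f+\overline f$ for a holomorphic $f$ on $\mathbb H^n$ and convert the hypothesis into a first-order condition on $f$. Since $D+U$ is a holomorphic vector field, it annihilates $\overline f$, so $(\textup{Re}\,(D+U))(f+\overline f)=\textup{Re}\,((D+U)f)$; combining this with \eqref{H phi_0} and \eqref{U phi_0} gives $(\textup{Re}\,(D+U))\log\psi=-(n+1)+\textup{Re}\,((D+U)f)$. The assumption $(\textup{Re}\,(D+U))\log\psi\equiv c$ therefore says that the holomorphic function $(D+U)f$ has constant real part $c+(n+1)$, hence equals a constant $C$ with $\textup{Re}\,C=c+(n+1)$. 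It thus suffices to prove $C\in\{0,\,2(n+1)\}$.

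Next I would restrict everything to the totally geodesic slice $S=\{(0,\ldots,0,w_n):\textup{Re}\,w_n>0\}\cong\mathbb H$, on which the unitary field $U$ vanishes identically. There the relation $(D+U)f=C$ collapses to $2w_n\,\frac{\partial f}{\partial w_n}(0,w_n)=C$, so $\frac{\partial f}{\partial w_n}(0,w_n)=C/(2w_n)$ is known explicitly. Evaluating the constant-norm identity \eqref{eq_constant norm} along $S$, where the cross term $\textup{Re}\big(\frac{\partial\overline f}{\partial\overline w_n}\sum_{j<n}\frac{\partial f}{\partial w_j}w_j\big)$ vanishes because $w'=0$, and substituting the explicit value of $\frac{\partial f}{\partial w_n}$, I obtain an identity of the shape
$$
\sum_{j=1}^{n-1}\left|\frac{\partial f}{\partial w_j}(0,w_n)\right|^2=\textup{Re}\,\frac{a}{w_n},\qquad a:=\big(2(n+1)\textup{Re}\,C-|C|^2\big)+2\sqrt{-1}(n+1)\,\textup{Im}\,C,
$$
whose right-hand side is harmonic in $w_n$.

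The decisive step is an elementary subharmonic-versus-harmonic comparison. The left-hand side is the squared norm of the holomorphic $\mathbb C^{n-1}$-valued map $w_n\mapsto\big(\frac{\partial f}{\partial w_1}(0,w_n),\ldots,\frac{\partial f}{\partial w_{n-1}}(0,w_n)\big)$, so applying $\partial_{w_n}\partial_{\overline w_n}$ produces $\sum_{j<n}\big|\partial_{w_n}\frac{\partial f}{\partial w_j}(0,w_n)\big|^2$ on the left and $0$ on the right. Hence each $\frac{\partial f}{\partial w_j}(0,\cdot)$ is constant; letting $\textup{Re}\,w_n\to\infty$, where $\textup{Re}\,(a/w_n)\to0$, forces these constants to vanish and forces $a=0$. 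From $a=0$ I read off $\textup{Im}\,C=0$ and then $C^2=2(n+1)C$, i.e. $C\in\{0,\,2(n+1)\}$, which yields $c=\textup{Re}\,C-(n+1)=\pm(n+1)$.

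I expect the main obstacle to be that $(D+U)f=C$ is a single scalar equation and so underdetermines the individual derivatives $\frac{\partial f}{\partial w_j}$; the whole point of passing to $S$ is that there $\frac{\partial f}{\partial w_n}$ becomes explicit while the remaining unknowns enter only through the single quantity $\sum_{j<n}\big|\frac{\partial f}{\partial w_j}\big|^2$, which the harmonicity argument eliminates with no further input. As a consistency check, the case $C=0$ forces $\textup{Re}\,\frac{\partial f}{\partial w_n}=0$ along $S$ and hence $f$ constant by Lemma~\ref{maximum principle}, while $C=2(n+1)$ corresponds to the genuinely non-constant potential.
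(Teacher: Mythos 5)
Your proposal is correct, and its skeleton coincides with the paper's: decompose $\log\psi=\log\psi_0+f+\overline f$, deduce from \eqref{H phi_0} and \eqref{U phi_0} that the holomorphic function $(D+U)f$ is a constant $C$, restrict to the slice $\{w'=0\}$ where $U$ vanishes so that $\frac{\partial f}{\partial w_n}(0,w_n)=C/(2w_n)$, and substitute into \eqref{eq_constant norm} (the paper normalizes its constant as $\frac{\partial f}{\partial w_n}=C/w_n$, i.e.\ half of yours, which is why it concludes $C\in\{0,n+1\}$ rather than $\{0,2(n+1)\}$; the resulting $c=\pm(n+1)$ agrees). Where you diverge is the endgame. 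The paper argues in two separate steps: first it forces $\textup{Im}\,C=0$ by a contradiction ``at $w_n=-\sqrt{-1}\,\textup{Im}\,C$'' --- strictly speaking a limiting argument, since that point lies on $\partial\mathbb H$ --- and then it appeals to ``the maximum principle'' to conclude $\frac{\partial f}{\partial w_j}\equiv 0$ on the slice, which yields $(n+1)C=C^2$. You fold both steps into a single mechanism: writing the slice identity as $\sum_{j}|\frac{\partial f}{\partial w_j}|^2=\textup{Re}(a/w_n)$ with $a$ encoding both $\textup{Re}\,C$ and $\textup{Im}\,C$, applying $\partial_{w_n}\partial_{\overline w_n}$ (subharmonic equals harmonic, so each $\frac{\partial f}{\partial w_j}$ is constant along the slice), and then letting $\textup{Re}\,w_n\to\infty$ to force those constants and $a$ to vanish; $a=0$ delivers $\textup{Im}\,C=0$ and $C^2=2(n+1)C$ simultaneously. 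Your version is, if anything, tighter at exactly the two places where the paper is terse: it needs no boundary-limit contradiction and replaces the unspecified maximum-principle appeal with an explicit Laplacian computation, at no cost in length or generality.
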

\begin{proof}
Since $\sqrt{-1}\partial\bar\partial \log \psi_0 = \sqrt{-1}\partial\bar\partial\log\psi = \omega_{\mathbb H^n}$, $\log\psi - \log\psi_0$ is a pluriharmonic function. Let $f\colon \mathbb H^n\to\mathbb C$ be a holomorphic function such that $\log\psi - \log\psi_0 = f+\bar f$.
As a result we have 
\begin{equation}\nonumber
    (\textup{Re} (D+ U)) \log \psi
    = -(n+1) + \textup{Re}\left( 2w_n\frac{\partial f}{\partial w_n} + \sum_{k=1}^{n-1} w_k\frac{\partial f}{\partial w_k}
    + \sum_{i,j=1}^{n-1} u_{ij}w_j\frac{\partial f}{\partial w_i}
    \right)\equiv c
\end{equation}
by \eqref{H phi_0} and \eqref{U phi_0}, and hence
 $w_n\frac{\partial f}{\partial w_n}$ restricted to $\{(0,\ldots,0, w_n)\in \mathbb H^n\}$ is constant. Let 
\begin{equation}\label{f'}
\frac{\partial f}{\partial w_n}(0,\ldots, 0,w_n) = \frac{C}{w_n}
\end{equation}
for some constant $C$.
By \eqref{eq_constant norm} 
we obtain
\begin{equation}\label{hyperbolic eqn}
0=-4(n+1)\text{Re} \frac{C}{w_n} + \sum_{j=1}^{n-1}\left| \frac{\partial f}{\partial w_j}\right|^2 + 4\text{Re}\,w_n \left|\frac{C}{w_n}\right|^2.
\end{equation}
Write $C=\alpha+\sqrt{-1}\beta$ with $\alpha,\,\beta\in\mathbb R$. Suppose $\beta\neq 0$. 
Then for $w_n=-\sqrt{-1}\beta$, we obtain a contradiction to the equation \eqref{hyperbolic eqn} and hence $C$ is a real number.
As a result, we obtain 
\begin{equation}\nonumber
    0=-4(n+1)C \frac{\text{Re}\,w_n}{|w_n|^2} + \sum_{j=1}^{n-1}\left| \frac{\partial f}{\partial w_j}\right|^2 + 4\text{Re}\,w_n \left|\frac{C}{w_n}\right|^2.
\end{equation}
By the maximum principle, $\frac{\partial f}{\partial w_j}\equiv 0$ on $\{(0,\ldots,0,w_n)\}$ for each $j=1,\ldots, n-1$.
Hence, we obtain
$$
(n+1)C =C^2
$$
 and hence $C=0$ or $C=n+1$.
Since $c=2C-(n+1)$ by \eqref{f'}, $c = -(n+1)$ or $c=n+1$.
\end{proof}

\subsection{Parabolic vector fields}\label{parabolic}

Since 
$$
T\log\psi_0 
= -
(n+1)\left( 2\sqrt{-1}\frac{\partial}{\partial w_n}\right)\log \left(\textup{Re } w_n - |w'|^2 \right)
= \frac{- \sqrt{-1}(n+1)}{\textup{Re } w_n - |w'|^2},
$$
we have 
\begin{equation}\label{P_1 log phi_0}
    (\textup{Re }T )\log\psi_0  \equiv 0.
\end{equation}
\begin{lemma}\label{P_lemma}
Let $\psi\colon \mathbb H^n\to\mathbb R$ be a function with $\partial\bar\partial\log\psi = \omega_{\mathbb H^n}$ and $\|\partial\log\psi\|^2_{\mathbb H^n}\equiv n+1$. Let $U$ be a unitary vector field. If 
$$ 
(\textup{Re}\,(T+U)) \log \psi \equiv 
    c$$ for some constant $c$, then $c=0$ and $\psi= r\psi_0$ for some $r>0$.
\end{lemma}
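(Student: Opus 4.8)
The plan is to mimic the proof of Lemma~\ref{H_lemma}, pushing all the information onto the one-dimensional slice $S=\{(0,\ldots,0,w_n)\in\mathbb H^n\}$, where the unitary part of $T+U$ drops out. First I would use that $\log\psi-\log\psi_0$ is pluriharmonic to write $\log\psi-\log\psi_0=f+\overline f$ for a holomorphic $f\colon\mathbb H^n\to\mathbb C$. Combining \eqref{P_1 log phi_0} and \eqref{U phi_0}, which give $(\text{Re}\,T)\log\psi_0\equiv 0$ and $(\text{Re}\,U)\log\psi_0\equiv 0$, and using $(\text{Re}\,V)(f+\overline f)=\text{Re}(Vf)$ for any holomorphic vector field $V$, the hypothesis becomes
\begin{equation}\nonumber
c=(\text{Re}\,(T+U))\log\psi=\text{Re}\bigl((T+U)f\bigr).
\end{equation}

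The key reduction is to restrict this identity to $S$. Since $U=\sum_{i,j}u_{ij}w_j\,\partial/\partial w_i$ carries an explicit factor $w_j$ in every term, $Uf$ vanishes identically on $S$, while $Tf=2\sqrt{-1}\,\partial f/\partial w_n$. Thus on $S$ the identity reads $c=\text{Re}\bigl(2\sqrt{-1}\,\partial f/\partial w_n\bigr)=-2\,\text{Im}\bigl(\partial f/\partial w_n\bigr)$, so the holomorphic function $w_n\mapsto(\partial f/\partial w_n)(0,\ldots,0,w_n)$ has constant imaginary part and is therefore a constant $C$, with $\text{Im}\,C=-c/2$.

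Next I would feed $C$ into the constant-norm equation \eqref{eq_constant norm}. On $S$ every term containing a factor $w_j$ with $1\le j\le n-1$ drops out, leaving
\begin{equation}\nonumber
\sum_{j=1}^{n-1}\Bigl|\tfrac{\partial f}{\partial w_j}\Bigr|^2=4(n+1)\,\text{Re}\,C-4\,\text{Re}\,w_n\,|C|^2 .
\end{equation}
The left-hand side is nonnegative for every $w_n$ with $\text{Re}\,w_n>0$, whereas the right-hand side is an affine function of $\text{Re}\,w_n$ with slope $-4|C|^2\le 0$; letting $\text{Re}\,w_n\to+\infty$ forces $C=0$, whence $c=0$. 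Then $\partial f/\partial w_n$ vanishes at every point of $S$, so in particular $\text{Re}(\partial f/\partial w_n)$ vanishes at $(0,\ldots,0,1)\in\mathbb H^n$. Lemma~\ref{maximum principle} upgrades this to $f\equiv\text{const}$, so that $f+\overline f=\log r$ for some $r>0$ and $\psi=r\psi_0$, completing the argument.

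I expect the only genuinely delicate point to be the reduction to $S$ combined with the non-negativity argument pinning down $C=0$: one must verify that each cross term in \eqref{eq_constant norm} indeed carries an explicit $w_j$ factor, so that it vanishes on the slice, and that the asymptotics in $\text{Re}\,w_n$ are governed by the honestly quadratic term $4\,\text{Re}\,w_n\,|C|^2$ rather than by the a~priori unknown functions $\partial f/\partial w_j$. Everything else is a routine transcription of the hyperbolic computation in Lemma~\ref{H_lemma}, with the role played there by $\frac{\partial f}{\partial w_n}=C/w_n$ replaced here by $\frac{\partial f}{\partial w_n}=C$.
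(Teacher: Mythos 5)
Your proposal is correct and follows essentially the same route as the paper's proof: the same decomposition $\log\psi=\log\psi_0+f+\overline f$, the same restriction to the slice $\{(0,\ldots,0,w_n)\}$ on which $Uf$ drops out and $\partial f/\partial w_n\equiv C$ with $\mathrm{Im}\,C=-c/2$, and the same specialization of \eqref{eq_constant norm} to that slice. The only deviation is minor (and arguably cleaner): you force $C=0$ by letting $\mathrm{Re}\,w_n\to+\infty$ against the nonnegative left-hand side, where the paper instead differentiates the slice identity by $\partial^2/\partial w_n\partial\overline w_n$ to show the $\partial f/\partial w_j$ are constant there before concluding $C=0$; both arguments then finish by the maximum-principle step, which you correctly make explicit via Lemma~\ref{maximum principle}.
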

\begin{proof}
Let $f$ be a holomorphic function on $\mathbb H^n$ such that $\log \psi=\log\psi_0+f+\overline f$.
Since by \eqref{P_1 log phi_0}
\begin{equation}\label{p_1 log phi}
\begin{aligned}
    (\textup{Re}\,(T+U))\log\psi = \textup{Re} \,(Tf +Uf)
    = \textup{Re}\left( 2i\frac{\partial f}{\partial w_n}
    +\sum_{i,j=1}^{n-1}u_{ij}w_j\frac{\partial f}{\partial w_i}\right)\equiv c,
    \end{aligned}
\end{equation}
and this implies that $\frac{\partial f}{\partial z_n}\equiv C$ for some constant $C$ on $\{(0,\ldots,0,w_n)\in\mathbb H^n\}$. Hence, by \eqref{eq_constant norm} we obtain 
\begin{equation}\label{parabolic constant norm eq}
    \begin{aligned}
    0&=-4(n+1)\text{Re}\,C 
    + \sum_{j=1}^{n-1}\left|\frac{\partial f}{\partial w_j}\right|^2 
    +4|C|^2\text{Re}\,w_n\\
    & =\sum_{j=1}^{n-1}\left|\frac{\partial f}{\partial w_j}\right|^2 +\text{Re}\left(
    -4(n+1)C  +4|C|^2 w_n
    \right)
    \end{aligned}
\end{equation}
on $\{(0,\ldots,0,w_n)\in\mathbb H^n\}$.
By differentiating \eqref{parabolic constant norm eq} with respect to $\frac{\partial^2}{\partial w_n\partial \overline w_n}$ we have 
\begin{equation}\nonumber
    \sum_{j=1}^{n-1}\left| \frac{\partial^2f}{\partial w_n\partial w_j}\right|^2\equiv 0
\end{equation}
and as a result $\frac{\partial f}{\partial w_j}$ with $j=1,\ldots,n-1$ are constant on $\{(0,\ldots,0,w_n)\in\mathbb H^n\}$. By \eqref{parabolic constant norm eq}  we have $4|C|^2 w_n$ is constant on $\{(0,\ldots,0,w_n)\in \mathbb H^n\}$. Therefore $C=0$ and $c=0$.
Thus  $f$ is a constant by \eqref{p_1 log phi} and \eqref{parabolic constant norm eq}. This completes the proof.
\end{proof}
\begin{remark}
For the vector fields $T^{2,k}$ and $T^{3,k}$ with $k=1,\ldots, n-1$ we also have 
\begin{equation}\nonumber
    (\textup{Re }T^{2,k}) \log \psi_0
    =(\textup{Re }T^{3,k}) \log \psi_0\equiv 0
    \end{equation}
since
\begin{equation}\nonumber
\begin{aligned}
T^{2,k} \log \psi_0 
&= -(n+1)\left(2 w_k\frac{\partial}{\partial w_n} + \frac{\partial}{\partial w_k}\right) \log \left(\textup{Re } w_n - |w'|^2 \right)\\
&=\frac{-(n+1)2\sqrt{-1}\,\text{Im }w_k}{\textup{Re } w_n - |w'|^2},
\end{aligned}
\end{equation}
and 
\begin{equation}\nonumber
\begin{aligned}
T^{3,k} \log \psi_0 
&= -(n+1)\left(-2\sqrt{-1} w_k \frac{\partial}{\partial w_n} + \sqrt{-1}\frac{\partial}{\partial w_k}\right) \log \left(\textup{Re } w_n - |w'|^2 \right)\\
&=\frac{(n+1)2\sqrt{-1}\,\text{Re}\,w_k}{\textup{Re } w_n - |w'|^2}.
\end{aligned}
\end{equation}
\end{remark}

\section{Proof of Theorem~\ref{main1}}\label{proof of main1}
Let $\psi\colon \mathbb H^n\to\mathbb R$ be a positive valued function such that 
$$ 
\sqrt{-1}\partial\bar\partial\log\psi = \omega_{\mathbb H^n}, \quad \|\partial\log \psi\|_{\omega_{\mathbb H^n}}^2=n+1.
$$
Let 
\begin{equation}\nonumber
W:=\sqrt{-1} \psi^{\frac{-n}{n+1}}\textup{grad}(\psi)
\end{equation}
which is the nowhere vanishing holomorphic complete vector field on $\mathbb H^n$ given in \cite[Theorem 3.2]{Choi-Lee2021} that satisfies $(\textup{Re}\,W)\log\psi \equiv 0$.

By composing an isotropy of $\mathbb H^n$ at $(0,\ldots,0,1)$ we can assume that $W$ vanishes at infinity, i.e. $\mathcal F_* W$ vanishes at $(0,\ldots,0,1)\in\partial\mathbb B^n$.
Since the Lie algebra of the automorphism group of $\mathbb H^n$, i.e. the set of complete holomorphic vector fields on $\mathbb H^n$ is isomorphic to $\mathfrak{su}(n,1)$, its dimension is $n^2+2n$. Hence it is generated by the holomorphic vector fields corresponding to $\mathfrak{s(u(1)\times u(n))}$ and $\mathcal D$, $\mathcal T^1$, $\mathcal T^{2,k}$, $\mathcal T^{3,k}$ with $k=1,\ldots,n-1$.
However, the complete holomorphic vector fields that vanish at infinity are  $D$, $T$, $T^{2,k}$, $T^{3,k}$, $U^{ij}$ and $V^{ij}$, 
$W$ has a decomposition 
$$
W = aD + b T + \sum_{k=1}^{n-1} c_k T^{2,k} + \sum_{k=1}^{n-1} d_k T^{3,k} +\sum_{1\leq i<j\leq n-1}e_{ij}U^{ij}
+\sum_{1\leq i<j\leq n-1}f_{ij}V^{ij}
+\sum_{k=1}^{n-1}g_kW^k
$$
with $a,b,c_k, d_k, e_{ij}, f_{ij}, g_k\in \mathbb R$.
Note that  $b\neq 0$ or $\sum_{k=1}^{n-1}c_k\neq0$ or $\sum_{k=1}^{n-1}d_k\neq0$ by Lemma~\ref{H_lemma}.

\smallskip
{\bf Step 1 :} By~\eqref{S} we have 
\begin{equation}\nonumber
\left(\mathcal S^{1,n-1}\mathcal S^{1,n-2}\cdots\mathcal S^{1,2}\right)_* W =  aD + b T 
+ c T^{2,1} 
+ dT^{3,1}
+\sum_{j=2}^{n-1} e_jU^{1j}
+\sum_{j=2}^{n-1}f_jV^{1j}
+gW^1
\end{equation}
where $c:=\sum_{k=1}^{n-1} c_k$, 
$d:=\sum_{k=1}^{n-1} d_k $,
$e_j:=-\sum_{i=1}^{j-1}e_{ij}$,
$f_j:=\sum_{i=1}^{j-1}f_{ij}$
and $g:=\sum_{k=1}^{n-1}g_k$.

For $\mathcal T:=\mathcal T^{3,n-1}_{s_{3,n-1}}\cdots \mathcal T^{3,1}_{s_{3,1}} \mathcal T^{2,n-1}_{s_{2,n-1}}\cdots \mathcal T^{2,1}_{s_{2,1}} \mathcal T_{s_1}$ 
by~\eqref{P_s}, \eqref{P_{2,k}},  \eqref{P_{3,k}} we have
\begin{equation}\nonumber
 \mathcal T_*  D
= D + \left(2\sum_{j=1}^ns_{2,j}s_{3,j} - 2s_1\right)T -\sum_{j=1}^ns_{3,j}T^{3,j} - \sum_{j=1}^n s_{2,j} T^{2,j},
\end{equation}
\begin{equation}\nonumber
\begin{aligned}
\mathcal T_*   T
&= T,\\
\mathcal T_*  T^{2,1}
&= T^{2,1} - 2s_{3,1}T,\\
\mathcal T_*  T^{3,1}
&=T^{3,1} + 2s_{2,1}T,
\end{aligned}
\end{equation}
\begin{equation}\nonumber
\begin{aligned}
&\mathcal T_*  U^{1j}
= U^{1j} + 2(s_{3,1}s_{2,j} - s_{2,1}s_{3,j})T - s_{3,j}T^{3,1} - s_{2,j}T^{2,1} + s_{3,1} T^{3,j} + s_{2,1}T^{2,j},
\\
&\mathcal T_*  V^{1j}
= V^{1j} - 2(s_{3,1}s_{3,j} + s_{2,1}s_{2,j})T + s_{3,j}T^{2,1} - s_{2,j}T^{3,1} + s_{3,1} T^{2,j} - s_{2,1}T^{3,j},
\end{aligned}
\end{equation}
and 
\begin{equation}\nonumber
\begin{aligned}
&\mathcal T_* W^1
= W^1 + s_{3,1}T^{2,1} - s_{2,1}T^{3,1} - (s_{3,1}^2 +s_{2,1}^2)T.
\end{aligned}
\end{equation}
Therefore 
\begin{equation}\label{fixed W}
\begin{aligned}
&\left(\mathcal T\mathcal S^{1,n-1}\mathcal S^{1,n-2}\cdots\mathcal S^{1,2}\right)_* W \\
&= aD 
+\left(2a\sum_{j=1}^ns_{2,j}s_{3,j} 
- 2as_1+b -2cs_{3,1} +2ds_{2,1}\right.\\
&\quad\quad\left.+\sum_{j=2}^{n-1}2e_j(s_{3,1}s_{2,j} - s_{2,1}s_{3,j})
- \sum_{j=2}^{n-1}2f_j(s_{3,1}s_{3,j} + s_{2,1}s_{2,j})- (s_{3,1}^2 +s_{2,1}^2)g\right)T\\
&
+\left( -as_{2,1}+c -\sum_{j=2}^{n-1} e_j
s_{2,j} +\sum_{j=2}^{n-1} f_js_{3,j} + gs_{3,1}\right)T^{2,1} \\
&+\left(-as_{3,1}+d -\sum_{j=2}^{n-1} e_j s_{3,j} -\sum_{j=2}^{n-1}f_js_{2,j}
 -s_{2,1}g\right)T^{3,1}\\ 
 &+\sum_{j=2}^{n-1}(-as_{2,j}+s_{2,1}e_j + s_{3,1}f_j)T^{2,j}
 +\sum_{j=2}^{n-1}(-as_{3,j}+s_{3,1}e_j - s_{2,1}f_j)T^{3,j}\\
 &+\sum_{j=2}^{n-1} e_jU^{1j}
+\sum_{j=2}^{n-1}f_jV^{1j}
+gW^1.
\end{aligned}
\end{equation}
Consider the following system of linear equations:
\begin{equation}\label{system of eqns}
\begin{aligned}
 &-as_{2,1}+c -\sum_{j=2}^{n-1} e_j
s_{2,j} +\sum_{j=2}^{n-1} f_js_{3,j} + gs_{3,1}=0,\\
&-as_{3,1}+d -\sum_{j=2}^{n-1} e_j s_{3,j} -\sum_{j=2}^{n-1}f_js_{2,j}
 -s_{2,1}g=0,\\
 &-as_{2,j}+s_{2,1}e_j + s_{3,1}f_j=0, \quad\forall j=2,\ldots,n-1\\
 &-as_{3,j}+s_{3,1}e_j - s_{2,1}f_j=0 ,\quad\forall j=2,\ldots,n-1
\end{aligned}
\end{equation}
which makes the equation~\eqref{fixed W} not to have $T^{2,j}$, $T^{3,j}$, $j=1,\dots, n-1$ factors.
This system of linear equations equals to 
\begin{equation}\label{matrix}
    \left(\begin{array}{cc|cc|c|cc}
    a&-g&e_2&-f_2&\cdots&e_{n-1}&-f_{n-1}\\
    g&a&f_2&e_2&\cdots&f_{n-1}&e_{n-1}\\\hline
    -e_2&-f_2& a&0 &&&\\
    f_2& -e_2&0&a&&&\\\hline
    \vdots &\vdots&&&\ddots&&\\\hline
    -e_{n-1}&-f_{n-1} &&&&a&0\\
    f_{n-2}&-e_{n-2}& &&&0&a
    \end{array}\right)
    \left(\begin{array}{c}
    s_{2,1}\\
    s_{3,1}\\
    s_{2,2}\\
    s_{3,2}\\
    \vdots\\
    s_{2,n-1}\\
    s_{3,n-1}
    \end{array}\right) 
    =\left(\begin{array}{c}
    c\\
    d\\
    0\\
    0\\
    \vdots\\
    0\\
    0
    \end{array}\right).
\end{equation}
Since $a\neq 0$ implies that the determinant of the left side matrix in \eqref{matrix} is nonzero, there exists $s_{2,\ell}$ and $s_{3,\ell}$ which satisfy the equation~\eqref{matrix}.
Moreover we can choose $s_1$ such that the equation \eqref{fixed W} has no $T$ factor.
With such $s_{2,\ell}$, $s_{3,\ell}$ and $s_1$, for
$$
\widetilde W := \left( \mathcal T^{3,n-1}_{s_{3,n-1}}\cdots \mathcal T^{3,1}_{s_{3,1}} \mathcal T^{2,n-1}_{s_{2,n-1}}\cdots \mathcal T^{2,1}_{s_{2,1}} \mathcal T_{s_1}\mathcal S^{1,n-1}\mathcal S^{1,n-2}\cdots\mathcal S^{1,2}\right)_* W$$
and $$
\widetilde\varphi := \varphi\circ \left( \mathcal T^{3,n-1}_{s_{3,n-1}}\cdots \mathcal T^{3,1}_{s_{3,1}} \mathcal T^{2,n-1}_{s_{2,n-1}}\cdots \mathcal T^{2,1}_{s_{2,1}} \mathcal T_{s_1}\mathcal S^{1,n-1}\mathcal S^{1,n-2}\cdots\mathcal S^{1,2}\right)
$$
we obtain 
\begin{equation}\nonumber
    \widetilde W \log\widetilde\psi = \left(aD
    -\sum_{j=2}^{n-1}e_jU^{1j}
    +\sum_{j=2}^{n-1}f_jV^{1j}+ gW^1\right)
    \log\widetilde\psi.
\end{equation}
Since $0=(\textup{Re } \widetilde W)\log\widetilde \psi=a(\textup{Re }D)\log\widetilde \psi = (n+1)a\log\widetilde\psi$ by Lemma~\eqref{H_lemma}, we obtain $a=0$.

\smallskip
{\bf Step 2:} Since $a=0$, we have 
\begin{equation}\nonumber
\left(\mathcal S^{1,n-1}\mathcal S^{1,n-2}\cdots\mathcal S^{1,2}\right)_* W =   b T 
+ c T^{2,1} 
+ dT^{3,1}
+\sum_{j=2}^{n-1} e_jU^{1j}
+\sum_{j=1}^{n-1}f_jV^{1j}
+gW^1.
\end{equation}

If $c$ and $d$ are zero, then by Lemma~\ref{P_lemma} the proof is completed.

Now assume $c$ or $d$ is nonzero. Moreover by multiplying $-1$ if $b$ is nonpositive, we may assume that $b\geq 0$.
By~\eqref{fixed W} we have
\begin{equation}\nonumber
\begin{aligned}
&\left(\mathcal T\mathcal S^{1,n-1}\mathcal S^{1,n-2}\cdots\mathcal S^{1,2}\right)_* W  \\
&
=\left(b-2cs_{3,1} +2ds_{2,1}+\sum_{j=2}^{n-1}2e_j(s_{3,1}s_{2,j} - s_{2,1}s_{3,j})
- \sum_{j=2}^{n-1}2f_j(s_{3,1}s_{3,j} + s_{2,1}s_{2,j})- (s_{3,1}^2 +s_{2,1}^2)g\right)T\\
&
+\left( c -\sum_{j=2}^{n-1} e_j
s_{2,j} +\sum_{j=2}^{n-1} f_js_{3,j} + gs_{3,1}\right)T^{2,1} 
+\left(d -\sum_{j=2}^{n-1} e_j s_{3,j} -\sum_{j=2}^{n-1}f_js_{2,j}
 -s_{2,1}g\right)T^{3,1}\\ 
 &+\sum_{j=2}^{n-1}(s_{2,1}e_j + s_{3,1}f_j)T^{2,j}
 +\sum_{j=2}^{n-1}(s_{3,1}e_j - s_{2,1}f_j)T^{3,j}
 +\sum_{j=2}^{n-1} e_jU^{1j}
+\sum_{j=2}^{n-1}f_jV^{1j}
+gW^1.
\end{aligned}
\end{equation}

If $e_j=f_j=0$ for all $j$ and $g\neq 0$.
Then by choosing $s_{3,1}=-\frac{c}{g}$ and $s_{2,1}=\frac{d}{g}$,
\begin{equation}\nonumber
\left(\mathcal T\mathcal S^{1,n-1}\mathcal S^{1,n-2}\cdots\mathcal S^{1,2}\right)_* W = \left(b+\frac{c^2+d^2}{g}\right)T +gW^1
\end{equation}
for some $\alpha\in \mathbb R$ and by Lemma~\ref{P_lemma} the proof is completed

If $e_j=f_j=0$ for all $j$ and $g= 0$.
Then by choosing $s_{3,1}$, $s_{2,1}$ such that $b-2cs_{3,1}+2ds_{2,1}=0$, we have  
$$
\left( \mathcal TS^{1,n-1}\mathcal S^{1,n-2}\cdots\mathcal S^{1,2}\right)_* W
    = cT^{2,1} + dT^{3,1}.
$$
Let
$$
\widetilde W :=\left( \mathcal T^{3,n-1}_{s_{3,n-1}}\cdots \mathcal T^{3,1}_{s_{3,1}} \mathcal T^{2,n-1}_{s_{2,n-1}}\cdots \mathcal T^{2,1}_{s_{2,1}} \mathcal S^{1,n-1}\mathcal S^{1,n-2}\cdots\mathcal S^{1,2}\right)_* W
$$
and 
$$
\widetilde\psi := \psi\circ \left( \mathcal T^{3,n-1}_{s_{3,n-1}}\cdots \mathcal T^{3,1}_{s_{3,1}} \mathcal T^{2,n-1}_{s_{2,n-1}}\cdots \mathcal T^{2,1}_{s_{2,1}} \mathcal S^{1,n-1}\mathcal S^{1,n-2}\cdots\mathcal S^{1,2}\right) .
$$
Then we obtain 
\begin{equation}\nonumber
    \widetilde W = 2Aw_1\frac{\partial }{\partial w_n} 
 +\overline A \frac{\partial }{\partial w_1}
\end{equation}
for $A=c-\sqrt{-1}d$. We remark that $A$ is nonzero.
Let $f$ be a holomorphic function such that $\log\tilde\psi =\log \psi_0 + f+\overline f$.
By the relation $0=(\textup{Re}\,\widetilde W)\log\widetilde\psi$, we obtain 
\begin{equation}\label{tilde W log psi}
2Aw_1\frac{\partial f }{\partial w_n} 
 +\overline A \frac{\partial f}{\partial w_1}=0
\end{equation}
and as a result \begin{equation}\label{fw1=0} 
\frac{\partial f}{\partial w_1}(0,w_2,\ldots, w_n)=0.
\end{equation}
By differentiating \eqref{tilde W log psi} with respect to $\frac{\partial}{\partial w_1}$, we obtain
\begin{equation}\label{differentiate}
2A\frac{\partial f }{\partial w_n} 
+2Aw_1\frac{\partial^2 f }{\partial w_1\partial w_n}
 +\overline A \frac{\partial^2 f}{\partial w_1^2}=0.
\end{equation}
By differentiating \eqref{eq_constant norm}
with respect to $\frac{\partial^2}{\partial w_1\partial\overline w_1}$
we obtain
\begin{equation}\label{w1w1}
\begin{aligned}
    0&= \sum_{j=1}^{n-1}\left|\frac{\partial^2 f}{\partial w_1\partial w_j}\right|^2 
    + \text{Re}\left( \frac{\partial^2 \overline f}{\partial \overline w_1\partial\overline w_n}\sum_{j=1}^{n-1}\frac{\partial}{\partial w_1}\left(\frac{\partial f}{\partial w_j}  w_j\right)\right)
    +4\text{Re}w_n\left|\frac{\partial^2 f}{\partial w_1\partial w_n}\right|^2.
\end{aligned}
\end{equation}
At any point $(0,0,\ldots,0,w_n)\in \mathbb H^n$, by~\eqref{fw1=0} and \eqref{w1w1} we obtain
\begin{equation}\nonumber
\begin{aligned}
    0&= \sum_{j=1}^{n-1}\left|\frac{\partial^2 f}{\partial w_1\partial w_j}\right|^2 
    +4\text{Re}\,w_n\left|\frac{\partial^2 f}{\partial w_1\partial w_n}\right|^2.
\end{aligned}
\end{equation}
In particular, we obtain 
\begin{equation}\nonumber
\frac{\partial^2 f}{\partial w_1^2}(0,\ldots,0,w_n)=0.
\end{equation}
By \eqref{differentiate}
\begin{equation}
\frac{\partial f}{\partial w_n}(0,\ldots,0,w_n)=0
\end{equation}
for any $w_n$ with $\text{Re}\,w_n>0$ and hence by Lemma~\ref{maximum principle} $f$ is a constant. 
This completes the proof.

If $e_j\neq 0$ or $f_j\neq 0$ for some $j$, then let $s_{2,1}=s_{3,1}=0$, $s_{2,\ell}=0$, $s_{3,\ell}=0$ for $\ell\neq j$ and 
$$
s_{2,j}=\frac{e_jc + f_jd}{e_j^2 + f_j^2}, \quad 
s_{2,j}=\frac{-f_jc + e_j d}{e_j^2 + f_j^2}
$$
so that  $c=\sum_{j=2}^{n-1}e_js_{2,j} + \sum_{j=2}^{n-1} f_js_{3,j}$ and $d=\sum_{j=2}^{n-1}e_js_{3,j} + \sum_{j=2}^{n-1} f_js_{2,j}$. As a result 
\begin{equation}
\left(\mathcal T\mathcal S^{1,n-1}\mathcal S^{1,n-2}\cdots\mathcal S^{1,2}\right)_* W = bT+\sum_{j=2}^{n-1} e_jU^{1j}
+\sum_{j=2}^{n-1}f_jV^{1j}
+gW^1.
\end{equation}
By Lemma~\ref{P_lemma}, the proof is completed. \qed

\section{Lie algebra structure of $\mathbb H^n$}\label{Lie algebra structure of Hn}
In \cite{KMO1970} Kaup--Matsushima--Ochiai studied the Lie algebra structures of homogeneous Siegel domains of the second kind. The goal of this section is to find a basis of the Lie algebra $\aut(\mathbb H^n)$ with respect to the decomposition which Kaup--Matsushima--Ochiai described in their work.

For a domain $\Omega$ in $\mathbb C^n$, denote by $\text{Aff}(\Omega)$ the group of affine transformations of $\Omega$, i.e. $$\text{Aff}(\Omega) = \{ f: f(z) = Az + b, A\in GL(n,\mathbb C), b\in \mathbb C^n, f(\Omega)=\Omega\}$$
and denote by $\aff(\Omega)$ its Lie algebra, which is isomorphic to the set of complete holomorphic vector fields on $\Omega$.

By \cite[Theorem 4]{KMO1970} the Lie algebra $\aut(\mathbb H^n)$ of $\Aut(\mathbb H^n)$ has a decomposition 
\begin{equation*}
\aut(\mathbb H^n) = \g_{-1}\oplus\g_{-1/2}\oplus\g_{0}\oplus\g_{1/2}\oplus\g_{1} 
\end{equation*}
where $\g_a=\{V\in\aut(\mathbb H^n):[D,V]=aV \}$.  
Then we have 
$$
T\in \g_{-1},\quad
T^{2,k}, T^{3,k}\in\g_{-1/2},\quad
D,U^{ij}, V^{ij}, W^k\in \g_0.
$$
Let us consider the involutive automorphism $\sigma$ of $\mathbb H^n$ at $(0,\ldots,0,1)$, that is, it satisfies $\sigma\circ\sigma = \mathrm{Id}_{\mathbb H^n}$ and $\sigma(0,\ldots,0,1)=(0,\ldots,0,1)$, given by
\begin{equation*}
\sigma(w)=\mathcal{C} (-\mathcal F(w)) = \left( -\frac{w_1}{w_n},\ldots,-\frac{w_{n-1}}{w_n},\frac{1}{w_n}\right).
\end{equation*}
By a straight computation, we have
\begin{align*}
\sigma_*\left(\frac{\partial}{\partial w_k}\right) &= -w_n\frac{\partial}{\partial w_k}, \quad 1\leq k\leq n-1\\
\sigma_*\left(\frac{\partial}{\partial w_n}\right) &= -\sum_{j=1}^{n-1} w_nw_j\frac{\partial}{\partial w_j}-w_n^2\frac{\partial}{\partial w_n}
	= -w_n\left( D-w_n\frac{\partial}{\partial w_n}\right).
\end{align*}
Therefore
\begin{align*}
\widetilde{T}
	& :=\sigma_*(T) 
    = -2\sqrt{-1} w_n \left(D-w_n\frac{\partial}{\partial w_n}\right) \\
\widetilde{T}^{2,k}
	&:=\sigma_*(T^{2,k}) = 2 w_k \left(D-w_n\frac{\partial}{\partial w_n}\right) -w_n\frac{\partial}{\partial w_k} 
	 \\
\widetilde{T}^{3,k}
	&:=\sigma_*(T^{3,k}) 
	= -2\sqrt{-1} w_k \left( D-w_n\frac{\partial}{\partial w_n}\right) -\sqrt{-1} w_n\frac{\partial}{\partial w_k}.
\end{align*}
Since
\begin{equation*}
\left[D,D-w_n\frac{\partial}{\partial w_n}\right] = -\left[D,w_n\frac{\partial}{\partial w_n}\right] = 0,
\end{equation*}
we have
\begin{align*}
[D,\widetilde{T}] 
 &= -2\sqrt{-1} ( D w_n) \left(D-w_n\frac{\partial}{\partial w_n}\right) 
= -4\sqrt{-1} w_n \left(D-w_n\frac{\partial}{\partial w_n}\right)  =  2\widetilde{T},
	\\
[D,\widetilde{T}^{2,k}] 
	&= 2 ( D w_k) \left(D-w_n\frac{\partial}{\partial w_n}\right) -\left[ D, w_n\frac{\partial}{\partial w_k} \right]
	\\
	&= 2  w_k \left(D-w_n\frac{\partial}{\partial w_n}\right) - w_n\frac{\partial}{\partial w_k} = \widetilde{T}^{2,k},
	\\	
[D,\widetilde{T}^{3,k}] 
	&= -2\sqrt{-1} (D w_k)  \left(D-w_n\frac{\partial}{\partial w_n}\right) -\sqrt{-1} \left[ D,w_n\frac{\partial}{\partial w_k} \right]
	\\
	&= -2\sqrt{-1} w_k  \left(D-w_n\frac{\partial}{\partial w_n}\right) -\sqrt{-1} w_n\frac{\partial}{\partial w_k} = \widetilde{T}^{3,k}.
\end{align*}
Therefore
\begin{equation*}
\widetilde{T}\in\g_1 \quad \text{ and }\quad
\widetilde{T}^{2,k},\widetilde{T}^{3,k}\in\g_{1/2} \;.
\end{equation*}

\begin{proposition}\label{prop:1}
\begin{enumerate}
\item\label{lie algebra}
$\g_{-1} = \ip{T}$,
$\g_{-1/2} = \ip{T^{2,k},T^{3,k}}$,
$\g_0 = \ip{D,U^{ij},V^{ij},W^k}$,\\
$\g_{1/2}=\ip{ \widetilde T^{2,k}, \widetilde T^{3,k}}$ ,
$\g_{1}=\ip{\widetilde T}$,
\item \label{affine v.f.} For $V\in\aut(\mathbb H^n)$, $V\in\aff(\mathbb H^n)$ if and only if $(\RE V) \log\varphi_0\equiv c$.
\end{enumerate}
\end{proposition}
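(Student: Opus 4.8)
\emph{Plan.} For part~\eqref{lie algebra} the plan is a pure dimension count. The bracket computations already carried out show that each listed field is an eigenvector of $[D,\,\cdot\,]$ sitting in the advertised eigenspace, and eigenspaces attached to distinct eigenvalues are automatically in direct sum; so it is enough to see that the listed fields are independent and that their number equals $\dim\aut(\mathbb H^n)=\dim\mathfrak{su}(n,1)=n^2+2n$. Counting,
$$
1+2(n-1)+\big(1+(n-1)^2\big)+2(n-1)+1=n^2+2n,
$$
where the middle summand records $D$ together with the standard basis $U^{ij},V^{ij},W^k$ of $\mathfrak u(n-1)$. Since the five eigenspaces already span $\aut(\mathbb H^n)$ and have the right total dimension, every inclusion $\ip{\cdots}\subseteq\g_a$ is forced to be an equality.

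Throughout part~\eqref{affine v.f.} write $\mu_V:=(\RE V)\log\psi_0$. For the forward implication I would first use part~\eqref{lie algebra}: the complete affine fields are precisely those of polynomial degree $\le 1$, and among the basis only $\widetilde T,\widetilde T^{2,k},\widetilde T^{3,k}$ carry (linearly independent) quadratic parts, so $\aff(\mathbb H^n)=\g_{-1}\oplus\g_{-1/2}\oplus\g_0$. On each of the spanning fields $T,T^{2,k},T^{3,k},D,U^{ij},V^{ij},W^k$ the quantity $\mu_V$ is an explicit constant by \eqref{H phi_0}, \eqref{U phi_0}, \eqref{P_1 log phi_0} and the Remark following Lemma~\ref{P_lemma}; hence $\mu_V$ is constant on all of $\aff(\mathbb H^n)$ by linearity.

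The converse is the heart of the matter, and two structural facts drive it. First, $\RE V$ is a Killing field of $\omega_{\mathbb H^n}$ preserving the complex structure, so $\sqrt{-1}\,\partial\bar\partial\mu_V=\mathcal L_{\RE V}\omega_{\mathbb H^n}=0$ and $\mu_V$ is \emph{pluriharmonic}. Second, the identity $[\RE D,\RE V]=\tfrac12\RE[D,V]$, valid for holomorphic fields, combined with $(\RE D)\log\psi_0\equiv-(n+1)$ from \eqref{H phi_0}, yields in one line the eigenvalue relation $(\RE D)\mu_V=\tfrac{\lambda}{2}\mu_V$ whenever $[D,V]=\lambda V$. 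Decomposing $V=\sum V_a$ along the grading, the pieces $\mu_{V_a}$ are then eigenfunctions of $\RE D$ for the five distinct eigenvalues $\lambda/2$; if $\mu_V\equiv c$, applying $\RE D$ repeatedly and inverting the resulting Vandermonde system forces $\mu_{V_a}\equiv0$ for every $a\neq0$.

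It remains only to see that $V\mapsto\mu_V$ has trivial kernel on $\g_{1/2}$ and on $\g_1$, and here I would transport along the involution $\sigma$. Since $\widetilde T=\sigma_*T$, $\widetilde T^{2,k}=\sigma_*T^{2,k}$, $\widetilde T^{3,k}=\sigma_*T^{3,k}$ and $\log\psi_0\circ\sigma=\log\psi_0+(n+1)\log|w_n|^2$, the transport rule $\mu_{\sigma_*V}=\big((\RE V)(\log\psi_0\circ\sigma)\big)\circ\sigma$ reduces each evaluation to differentiating the pluriharmonic cocycle $(n+1)\log|w_n|^2$ and substituting $w\mapsto\sigma(w)$. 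This produces
$$
\mu_{\widetilde T}=-2(n+1)\,\mathrm{Im}\,w_n,\quad \mu_{\widetilde T^{2,k}}=-2(n+1)\,\RE w_k,\quad \mu_{\widetilde T^{3,k}}=-2(n+1)\,\mathrm{Im}\,w_k,
$$
which are linearly independent nonconstant functions; thus $\mu_{V_{1/2}}\equiv0$ and $\mu_{V_1}\equiv0$ force $V_{1/2}=V_1=0$, i.e.\ $V\in\aff(\mathbb H^n)$. One may even bypass the Vandermonde step: reading $\mu_V$ off a general basis element exhibits it as a constant plus the linear function $-2(n+1)\big(\sum_k p_k\RE w_k+\sum_k q_k\,\mathrm{Im}\,w_k+s_0\,\mathrm{Im}\,w_n\big)$, which is constant exactly when the degree-two coefficients $p_k,q_k,s_0$ all vanish. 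The one genuine obstacle is this converse: the forward direction and the dimension count are bookkeeping, whereas detecting the positive-degree part of $V$ requires the eigenvalue identity together with the explicit computation of $\mu$ on $\widetilde T,\widetilde T^{2,k},\widetilde T^{3,k}$, for which $\sigma$ and the cocycle $(n+1)\log|w_n|^2$ do the work.
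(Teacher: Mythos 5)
Your proposal is correct and takes essentially the same route as the paper: part~\eqref{lie algebra} is the same linear-independence-plus-dimension count against $\dim\aut(\mathbb H^n)=n^2+2n$, and part~\eqref{affine v.f.} rests, as in the paper, on evaluating $(\RE V)\log\psi_0$ on the graded basis and observing that it is constant exactly on $\g_{-1}\oplus\g_{-1/2}\oplus\g_0=\aff(\mathbb H^n)$ while producing linearly independent nonconstant linear functions (multiples of $\mathrm{Im}\,w_n$, $\RE w_k$, $\mathrm{Im}\,w_k$) on $\g_{1/2}\oplus\g_1$. Your $\sigma$-cocycle computation of $\mu_{\widetilde T},\mu_{\widetilde T^{2,k}},\mu_{\widetilde T^{3,k}}$ via $\log\psi_0\circ\sigma=\log\psi_0+(n+1)\log|w_n|^2$ and the optional eigenvalue/Vandermonde layer are presentational variants that reproduce the paper's explicit values (up to its normalization of $\RE V$), not a genuinely different argument.
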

\begin{proof}
For \eqref{lie algebra} we remark that the vectors $T$, $D$, $T^{2,k}$, $T^{3,k}$, $U^{ij}$, $V^{ij}$, $W^k$, $\widetilde T$, $\widetilde T^{2,k}$ and $\widetilde T^{3,k}$ with $k=1,\ldots,n-1$, $1\leq i<j\leq n-1$ are linearly independent and the number of these vector fields is $n^2+2n$.
Since $\dim_{\mathbb R} \dim \Aut(\mathbb H^n)=n^2 + 2n$, we complete the proof.

    To prove \eqref{affine v.f.} define $\rho_0(w):= \RE w_n - |w'|^2$.
Since
\begin{align*}
\left(D-w_n\frac{\partial}{\partial w_n}\right) \rho_0 = w_n-|w'|^2-\frac{w_n}{2} = \frac{w_n}{2}-|w'|^2,
\end{align*}
we obtain
\begin{align*}
(\RE \widetilde{T})\rho_0 
	&= -2\sqrt{-1} w_n \left(\frac{w_n}{2}-|w'|^2\right) +2\sqrt{-1} \bar w_n\left( \frac{\overline w_n}{2}-|w'|^2\right)
	\\
	&= -\sqrt{-1} (w_n^2- \bar w_n^2) +2\sqrt{-1} (w_n-\bar w_n)|w'|^2
	= -2\sqrt{-1} (w_n-\bar w_n)\rho_0,
	\\
(\RE \widetilde{T}^{2,k})\rho_0 
	&= 2w_k\left(\frac{w_n}{2}-|w'|^2\right) +w_n\bar w_k + 2\bar w_k\left(\frac{\bar w_n}{2}-|w'|^2\right) + \bar w_n w_k
	\\
	&= (w_k+\bar w_k)(w_n+\bar w_n) -2(w_k+\bar w_k)|w'|^2 
	= 2(w_k+\bar w_k)\rho_0,
	\\
(\RE \widetilde{T}^{3,k})\rho_0 
	&= -2\sqrt{-1} w_k\left(\frac{w_n}{2}-|w'|^2\right) +\sqrt{-1} w_n\bar w_k 
	+ 2\sqrt{-1} \bar w_k\left(\frac{\bar w_n}{2}-|w'|^2\right) -\sqrt{-1} \bar w_n w_k
	\\
	&=-\sqrt{-1} (w_k-\bar w_k)(w_n+\bar w_n) +2\sqrt{-1}(w_k-\bar w_k)|w'|^2
	=-2\sqrt{-1} (w_k-\bar w_k)\rho_0.
\end{align*}
Since $\log\psi_0 = -(n+1)\log\rho_0$, we have 
\begin{equation*}
X\log\psi_0 = -(n+1)\frac{X\rho_0}{\rho_0}
\end{equation*}
for any real tangent vector field $X$ of $\mathbb H^n$,
thus
\begin{align*}
(\RE \widetilde{T})\log\psi_0 &= 2(n+1)\sqrt{-1} (w_n-\bar w_n)
=-4(n+1)\text{Im}\, w_n, \\
(\RE \widetilde{T}^{2,k})\log\psi_0 &= -2(n+1)(w_k+\bar w_k)=-4(n+1)\RE w_k, \\
(\RE \widetilde T^{3,k}
)\log\psi_0 &= -2\sqrt{-1} (n+1) (w_k-\bar w_k) = (n+1)\text{Im}\, w_k. 
\end{align*}
This completes the proof.
\end{proof}

\section{Proofs of Theorem~\ref{main2} and Theorem~\ref{main3}}\label{proof of main23}

\begin{proof}[Proof of Theorem~\ref{main2}]
Let $F:=G\circ \mathcal F\colon \mathbb H^n \to \Omega$ be a biholomorphism and $\omega_\Omega=(g_{i\overline j})$ be the K\"ahler form of the Bergman metric of $\Omega$.
 We may assume $(0,\ldots,0,1)\in \Omega$ and $dF(0)=I_n$. Let
$$
I_{1,2} := \left(\begin{array}{cccc}
    1 &&&  \\
     &\ddots&&\\
     &&1&\\
     &&&2
\end{array}\right).
$$
Let $\varphi:= K_\Omega$ be the Bergman kernel of $\Omega$. 
Since $\Omega$ is homogeneous, by the transformation formula of the Bergman kernel by automorphisms, we have 
\begin{equation}\nonumber
K_\Omega(z,z) = c\det (g_{i\overline j}(z))
\end{equation}
for some constant $c$.
This implies that 
$\omega_\Omega$ is K\"ahler--Einstein and
$\sqrt{-1}\partial\overline\partial\log\varphi=\omega_\Omega$.
Since $\Omega$ is affine homogeneous, we have $\|\partial\log\varphi\|^2_{\omega_\Omega}$ is a constant.
Let $\widetilde \varphi := \varphi\circ F$. Then $\sqrt{-1}\partial\overline\partial  \log\widetilde \varphi = \omega_{\mathbb H^n}$ and $\|\partial\log \widetilde\varphi\|^2_{\omega_{\mathbb H^n}}\equiv C$ for some constant $C$. By Corollary~\ref{thm_ball}, if we compose an isotropy of $\mathbb H^n$ at $(0,\ldots,0,1)$, we have $\widetilde\varphi = \psi_0$ and $C=n+1$.

If $V\in \aff(\Omega)$, then $\RE V\log \varphi \equiv c$ for some constant $c$. Thus Proposition~\ref{lie algebra} \eqref{affine v.f.} implies that $V\in F_* \aff(\mathbb H^n)$, i.e. $$
\aff(\Omega)\subset F_*\aff(\mathbb H^n).
$$
For $p=F(p_0)$ where $p_0=(0,\ldots,0,1)\in \mathbb H^n$, the affine homogeneity implies that 
$$
T_p\Omega = \{ V(p):V\in \aff(\Omega)\}.
$$
Since $\{D_{p_0}, T_{p_0}, T^{2,k}_{p_0}, T^{3,k}_{p_0}, k=1,\ldots, n-1\}$ is a basis of $T_{p_0}\mathbb H^n$ and $U_{p_0}$ vanishes for all unitary vector fields $U$ of $\mathbb H^n$, we obtain 
\begin{equation*}
\widehat{T}, \widehat{T}^{2,k}, \widehat{T}^{3,k}, \widehat{D}\in\aff(\Omega)
\end{equation*}
where $\widehat T:= F_*T$, $\widehat T^{2,k}:=F_* T^{2,k}$, $\widehat T^{3,k}:=F_* T^{3,k}$ and $\widehat D:= F_*D$.

Now write $\widehat T$ and $\widehat D$ as
\begin{equation}\nonumber
    \widehat T = \sum_{i=1}^n\left(\sum_{j=1}^n a_{ij}w_j + b_i \right)\frac{\partial}{\partial w_i},\quad
    \widehat D = \sum_{k=1}^n\left(\sum_{\ell=1}^n A_{k\ell}w_\ell + B_k\right) \frac{\partial}{\partial w_k}
\end{equation}
with constants $a_{ij}, b_i, A_{k\ell}, B_k\in \mathbb C$.
Denote $a=(a_{ij})$, $b=(b_i)$, $A=(A_{k\ell})$ and $B=(B_k)$ be the matrices corresponding to $a_{ij}$, $b_i$, $A_{k\ell}$ and $B_k$, respectively.
Let $JF$ denote the Jacobian matrix of $F$.

By $F_*T=\widehat T$ and $F_*D = \widehat D$, we have 
\begin{equation}\label{first eq}
    \begin{aligned}
        &2\sqrt{-1}\frac{\partial F_i}{\partial z_n} = \sum_{j=1}^na_{ij} F_j + b_i,\\
        &2z_n\frac{\partial F_i}{\partial z_n} + \sum_{\ell=1}^{n-1} z_\ell\frac{\partial F_i}{\partial z_\ell} = \sum_{j=1}^n A_{ij} F_j+B_i
    \end{aligned}
\end{equation}
for any $i=1,\ldots,n$.
By differentiating these equations with respect to $\frac{\partial}{\partial z_k}$ with $k=1,\ldots,n$ we obtain
\begin{equation}\label{second eq}
    \begin{aligned}
        &2\sqrt{-1}\frac{\partial }{\partial z_n} \frac{\partial F_i}{\partial z_k} = \sum_{j=1}^na_{ij} \frac{\partial F_j}{\partial z_k},\\
        &2z_n \frac{\partial}{\partial z_n}\frac{\partial F_i}{\partial z_k} +\frac{\partial F_i}{\partial z_k} +\sum_{\ell=1}^{n-1} z_\ell\frac{\partial^2 F_i}{\partial z_k\partial z_\ell} = \sum_{j=1}^n A_{ij}\frac{\partial F_j}{\partial z_k}, \quad \forall k=1,\ldots, n-1,\\
        &2z_n \frac{\partial}{\partial z_n}\frac{\partial F_i}{\partial z_n} +2\frac{\partial F_i}{\partial z_n} 
        +\sum_{\ell=1}^{n-1} z_\ell\frac{\partial^2 F_i}{\partial z_n\partial z_\ell} = \sum_{j=1}^n A_{ij}\frac{\partial F_j}{\partial z_n}.
    \end{aligned}
\end{equation}
At $(0,\ldots,0,z_n)$ as a matrix form we have 
\begin{equation}\nonumber
    \begin{aligned}
        &2\sqrt{-1}\frac{\partial }{\partial z_n} JF = aJF,\\
        &2z_n \frac{\partial}{\partial z_n} JF + I_{1,2} JF = A JF.
    \end{aligned}
\end{equation}
This implies that 
\begin{equation}\nonumber
    -\sqrt{-1}z_n aJF + I_{1,2} JF = AJF
\end{equation}
and hence 
\begin{equation}\nonumber
    -\sqrt{-1}z_na + I_{1,2} = A.
\end{equation}
By letting $z_n\to 0$, we obtain $A=I_{1,2}$ and hence $a=0$.
By $F(0,\ldots,0,1)=(0,\ldots,0,1)$, $dF(0)=I_n$ and \eqref{first eq}, 
\begin{equation}\nonumber
    \widehat T = T,\quad \widehat D=D.
\end{equation}
This implies that 
\begin{equation}\label{in}
    \frac{\partial F_i}{\partial z_n} = \delta_{in} 
\end{equation}
and by \eqref{second eq} we have
\begin{equation}\nonumber
    \sum_{\ell=1}^{n-1} z_\ell \frac{\partial^2 F_i}{\partial z_\ell \partial z_k} = 0.
\end{equation}
Now fix $z_n$ and express $\frac{\partial F_i}{\partial z_k}$ by homogeneous polynomials $P_j$ of degree $j$ in $z_1,\ldots, z_{n-1}$ as
\begin{equation}
    \frac{\partial F_i}{\partial z_k}
    = \sum_{j=0}^\infty P_j(z_1,\dots z_{n-1}).
\end{equation}
Then, since 
\begin{equation}
    0=\sum_{\ell=1}^{n-1} z_\ell \frac{\partial^2 F_i}{\partial z_\ell \partial z_k}=\sum_{j=1}^\infty jP_j(z_1,\ldots,z_{n-1}),
\end{equation}
we obtain $P_j\equiv 0$ for any $j=1,2,\ldots$. This implies 
$$\frac{\partial F_i}{\partial z_k}=\eta(z_n)$$ for some function $\eta$. 
Since by \eqref{in} 
\begin{equation}
0=\frac{\partial^2 F_i}{\partial z_k\partial z_n} = \frac{\partial \eta}{\partial z_n},
\end{equation}
$\eta$ is constant. 
Hence $\frac{\partial F_i}{\partial z_k}$ is a constant $\delta_{ik}$. With \eqref{in}, we obtain $F=id$. 
As a result $G=\mathcal C$ and $\Omega=\mathbb H^n$.

\end{proof}

\begin{proof}[Proof of Theorem~\ref{main3}]
Let $G\colon \mathbb B^n\to D$ be a biholomorphism. It is worth to emphasize that since $\omega_D$ is the pullback of the Bergman metric $\omega_{\mathbb B^n}$ on $\mathbb B^n$, $G$ is an isometry with respect to $\omega_{\mathbb B^n}$ and $\omega_D$. 
As a result we obtain
\begin{equation}\nonumber
    \sum_{\alpha,\beta=1}^n g_{\alpha\bar\beta}\circ G \frac{\partial G_\ell}{\partial z_\alpha}\frac{\partial\overline G_m}{\partial \overline z_\beta}
    =\frac{\partial^2}{\partial z_\ell \partial \overline z_m}\log K_{\mathbb B^n}(z,z)
\end{equation}
where $K_{\mathbb B^n}$ denotes the Bergman kernel of $\mathbb B^n$. By taking the determinant in both sides of the equation, 
one has 
\begin{equation}\label{isometry}
    \det g_{\alpha\overline\beta}\circ G |\det dG|^2 = \det (g_{\alpha\bar\beta}^{\mathbb B^n}) = K_{\mathbb B^n}(z,z) = \frac{C_1}{(1-|z|^2)^{n+1}}
\end{equation}
for some constant $C_1$.
Since $\omega_D$ is the K\"ahler--Einstein metric, we have $\partial\bar\partial \log\det (g_{\alpha\bar\beta})
=\omega_D$.
Without loss of generality we may assume that $(0,\ldots, 0,1)\in D$ and $G(0) = (0,\ldots, 0,1)$.

Since $
\| \partial\log\det (g_{\alpha\bar\beta})\|_{\omega_D}^2\equiv C_2$ for some constant $C_2$, one has 
\begin{equation}\nonumber
\| \partial\left(\log\det (g_{\alpha\bar\beta})\circ G\right)\|_{\omega_{\mathbb B^n}}^2\equiv C_2
\quad\text{ and }\quad 
\sqrt{-1}\partial\bar\partial \left(\log\det (g_{\alpha\bar\beta})\circ G\right) = \omega_{\mathbb B^n}.
\end{equation}
Define $\varphi:=  \det(g_{\alpha\bar\beta})\circ G$. 
By Theorem~\ref{uniqueness of constant} we have $C_2=n+1$ and by Corollary~\ref{thm_ball}, we have 
\begin{equation}\nonumber
    \det(g_{\alpha\bar\beta})\circ G
    = C_3\frac{|1-z_n|^{2(n+1)}}{(1-|z|^2)^{n+1}}
\end{equation}
for some constant $C_3\in \mathbb R$ by composing an isotropy of $\mathbb B^n$ at $0$.
Hence, by \eqref{isometry} we obtain
\begin{equation}\nonumber
    |\det dG|^2 = C_4|1-z_n|^{-2(n+1)}
\end{equation}
for some constant $C_4$ and therefore we have 
\begin{equation}\label{det dG}
    \det dG = C_5 (1-z_n)^{-(n+1)}
\end{equation}
for some constant $C_5$.
By the affine transformation on $\mathbb C^n$, we may assume that 
\begin{equation}\label{dG(0)}
dG(0) = \left(\begin{array}{ccccc}
1&0&\cdots&0\\
&\ddots&&\\
0&\cdots&1&0\\
0&\cdots&0&2\\
\end{array}\right)
\end{equation}
and hence $C_5 = 2$.

Now suppose that $G$ is a M\"obius transformation, i.e.
\begin{equation}\nonumber
    G(z) = \left(
    \frac{\sum a_{1,j}z_j + a_{1,n+1}}{\sum a_{n+1, j}z_j + a_{n+1, n+1}}, \ldots,
    \frac{\sum a_{n,j}z_j + a_{n,n+1}}{\sum a_{n+1, j}z_j + a_{n+1, n+1}}
    \right).
\end{equation}
Since $a_{n+1,n+1}\neq 0$, assume that $a_{n+1,n+1}=1$.
Since $G(0) = (0,\ldots, 0,1)$, we have $a_{j,n+1}=0$ for any $j=1,\ldots,n$ and $a_{n,n+1}=1$.
By \eqref{dG(0)} 
\begin{equation}\nonumber
    a_{ki}=\delta_{ki} 
\end{equation}
for all $k=1,\ldots, n-1$, $i=1,\ldots,n$.
As a result, we have 
\begin{equation}\nonumber
    G(z) = \frac{\left(
    z_1, \ldots,
    z_{n-1}, 
    \sum a_{n,j}z_j + a_{n,n+1}
    \right)}{\sum a_{n+1, j}z_j + a_{n+1, n+1}}.
\end{equation}
Since for $\sigma(z) := \sum a_{n+1, j}z_j + a_{n+1, n+1}$
\begin{equation}\label{dG(z)}
    \begin{aligned}
    dG(z) &= \frac{1}{\sigma(z)}\left(\begin{array}{ccccc}
    1&0&\cdots&0&a_{n,1}\\
    0&1&\cdots&0&a_{n,2}\\
    &&&&\\
    0&\cdots&0&1&a_{n,n-1}\\
    0&\cdots&&0&a_{n,n}
    \end{array}\right)\\
    &-\frac{1}{\sigma(z)^2}\left(\begin{array}{ccccc}
    a_{n+1,1}z_1&a_{n+1,1}z_2&\cdots&a_{n+1,1}z_{n-1}& a_{n+1,1}(\sum a_{n,j}z_j + a_{n,n+1})\\
    a_{n+1,2}z_1&a_{n+1,2}z_2&\cdots&a_{n+1,2}z_{n-1}& a_{n+1,2}(\sum a_{n,j}z_j + a_{n,n+1})\\
    &&&&\\
    a_{n+1,n}z_1&a_{n+1,n}z_2&\cdots&a_{n+1,n}z_{n-1}& a_{n+1,n}(\sum a_{n,j}z_j + a_{n,n+1})\\
    \end{array}\right),
    \end{aligned}
\end{equation}
at $z=(z_1,0,\ldots, 0)$ we have
\begin{equation}\nonumber
    dG(z_1,0,\ldots, 0) = \frac{1}{\sigma^2}
    \left(\begin{array}{c|ccc|c}
    \sigma - a_{n+1,1}z_1&0&\ldots&0&a_{n,1}\sigma - a_{n+1,1}(a_{n,1}z_1 + z_{n,n+1})\\\hline
    -a_{n+1,2}z_1& \sigma&&&a_{n,2}\sigma - a_{n+1,2}(a_{n,1}z_1 + z_{n,n+1})\\
    &&\ddots&&\\
    -a_{n+1,n-1}z_1& &&\sigma&a_{n,n-1}\sigma - a_{n+1,n-1}(a_{n,1}z_1 + z_{n,n+1})\\\hline
    -a_{n+1,n}z_1&0 &\ldots&0&a_{n,n}\sigma - a_{n+1,n}(a_{n,1}z_1 + z_{n,n+1}),
    \end{array}
    \right)
\end{equation}
for $\sigma:=a_{n+1, 1}z_1 + a_{n+1, n+1}$ and by \eqref{det dG} we obtain
\begin{equation}\nonumber
\begin{aligned}
    2\sigma^{n+2} &= a_{n+1,n+1}\left(\sigma a_{n,n}-a_{n+1,n}(a_{n,1}z_1 + a_{n,n+1})\right) \\
    &\quad\quad\quad\quad\quad\quad
    +a_{n+1,n}z_1(a_{n,1}a_{n+1,n+1} - a_{n+1,1}a_{n,n+1}).
\end{aligned}
\end{equation}
This implies that $a_{n+1,1}=0$.
By the similar argument we have $a_{n+1,j}=0$ for any $j=1,\ldots,n-1$.
By \eqref{dG(z)} one has
\begin{equation}\nonumber
    dG(z) = \frac{1}{\sigma(z)^2}
    \left(\begin{array}{ccc|c}
    \sigma(z)&&&a_{n,1}\sigma(z)\\
    &\ddots&&\\
    &&\sigma(z)&a_{n,n-1}\sigma(z)\\\hline
    -a_{n+1,n}z_1&\cdots& -a_{n+1,n}z_{n-1} & a_{n,n}\sigma(z) - a_{n+1,n}(\sum a_{n,j}z_j + a_{n,n+1})
    \end{array}\right).
\end{equation}
By \eqref{dG(0)}, $a_{nj}=0$ for any $j=1,\ldots, n-1$.
At $z=(0,\ldots,0,z_n)$ we have 
\begin{equation}\label{0,z_n}
    \det dG(z) = \frac{a_{n,n} a_{n+1,n+1} - a_{n+1,n} a_{n,n+1}}{(a_{n+1,n}z_n + a_{n+1,n+1})^{n+1}}
    =2(1-z_n)^{-1-n}.
\end{equation}
Since $dG(0)_{nn} =2$, we have 
\begin{equation}\nonumber
    \frac{a_{n,n} a_{n+1,n+1} - a_{n+1,n} a_{n,n+1}}{ a_{n+1,n+1}^{2}}=2.
\end{equation}
Hence \eqref{0,z_n} we obtain 
\begin{equation}\nonumber
    a_{n+1,n+1}^{2}(1-z_n)^{n+1} 
    = (a_{n+1,n}z_n + a_{n+1,n+1})^{n+1}
\end{equation}
which implies $a_{n+1,n+1}^{n-1}=1$ and $a_{n+1,n+1} = -a_{n+1,n}$. By taking a rotation, we obtain $G = \mathcal C$.

\end{proof}

\end{document}